%%%%%%%%%%%%%%%%%%%%%%%%%%%%%%%%%%%%%%%%%%%%%%%%%%%%%%%%%%%%%%%%%%%%%%%%%%%%%%
%%%%%%%%%%%%%%%%%%%% Preamble %%%%%%%%%%%%%%%%%%%%%%%%%%%%%%%%%%%%%%%%%%%%%%%%
%%%%%%%%%%%%%%%%%%%%%%%%%%%%%%%%%%%%%%%%%%%%%%%%%%%%%%%%%%%%%%%%%%%%%%%%%%%%%%

%% Document Class

\documentclass[11pt,a4paper]{amsart}
% \documentclass{ip-journal}
%% Favourite Packages

\usepackage{amsthm, amsfonts, amssymb, amsmath, latexsym, enumerate, array}
\usepackage[all]{xy}
\usepackage[utf8]{inputenc}
\usepackage{hyperref,cite,mdwlist,mathrsfs}
%\usepackage{stmaryrd}
%\usepackage{MnSymbol}
%\usepackage{anysize}
%\marginsize{4cm}{4cm}{4cm}{4cm}

%% Fonts
\newcommand{\mathbold}{\mathbf}

%% Theorems and Lemmas

\newtheorem*{main*}{Theorem}
\newtheorem*{mcorol*}{Corollary}
\newtheorem{thm}{Theorem}[section] \newtheorem{lem}[thm]{Lemma}
 \newtheorem{prop}[thm]{Proposition}
\newtheorem{claim}[thm]{Claim}
\newtheorem*{thm*}{Theorem}

%% Definitions and Remarks
\theoremstyle{definition} 
\newtheorem{eg}[thm]{Example}

\newtheorem*{rmk*}{Remark}

%% Script letters:
\newcommand{\OO}{\mathscr{O}}

\newcommand{\EExt}{\mathscr{E}xt}
\newcommand{\HHom}{\mathscr{H}om}

\newcommand{\sE}{\mathscr{E}}

\newcommand{\sH}{\mathscr{H}}
\newcommand{\sF}{\mathscr{F}}
\newcommand{\sK}{\mathscr{K}}
\newcommand{\sL}{\mathscr{L}}
\newcommand{\sP}{\mathscr{P}}
\newcommand{\sM}{\mathscr{M}}
\newcommand{\sN}{\mathscr{N}}
\newcommand{\sC}{\mathscr{C}}

\newcommand{\sB}{\mathscr{B}}
\newcommand{\sA}{\mathscr{A}}
\newcommand{\sT}{\mathscr{T}}
\newcommand{\sY}{\mathscr{Y}}
\newcommand{\sG}{\mathscr{G}}

%% Sans-serif letters

\DeclareMathOperator{\id}{{\sf id}}
\DeclareMathOperator{\rD}{D}
\DeclareMathOperator{\sing}{{\mathrm Sing}}
\DeclareMathOperator{\cl}{{\mathrm Cl}}

%% Roman letters

\DeclareMathOperator{\Hilb}{Hilb}

\DeclareMathOperator{\Hom}{Hom} 
\DeclareMathOperator{\im}{Im} \DeclareMathOperator{\cok}{coker}

\DeclareMathOperator{\HH}{H} \DeclareMathOperator{\hh}{h}

\DeclareMathOperator{\Pic}{Pic}

\DeclareMathOperator{\Aut}{Aut}
\DeclareMathOperator{\codim}{codim}

\DeclareMathOperator{\Coh}{Coh}
\DeclareMathOperator{\Spec}{Spec}

%% Fraktur letters

%% Mathbb letters
\newcommand{\Z}{\mathbb Z} 
 
 \newcommand{\p}{\mathbb P}
\newcommand{\V}{\mathbb V}
 \newcommand{\G}{\mathbf G}

%% Mathbold letters

\newcommand{\RR}{\mathbold{R}}
\newcommand{\bW}{\mathbold{W}}

%% Miscellaneous symbols
\DeclareMathOperator{\ts}{\otimes}
\newcommand{\mono}{\hookrightarrow}

\newcommand{\xr}{\xrightarrow}
\newcommand{\kk}{\boldsymbol{k}}

%% Misc Options
\SelectTips{cm}{} \newdir{ >}{{}*!/-5pt/@{>}}
%\numberwithin{equation}{section}

% \usepackage{showkeys}
%\usepackage{pdfdraftcopy}
%\usepackage{numline}
%\usepackage{lineno}
%\linenumbers
%\linenumberdisplaymath
%\linespread{1.20}

\allowdisplaybreaks[1]
\sloppy

\begin{document}

%%%%%%%%%%%%%%%%%%%%%%%%%%%%%%%%%%%%%%%%%%%%%%%%%%%%%%%%%%%%%%%%%%%%%%%%%%%%%%
%%%%%%%%%%%%%%%%%%%% Author(s) and Address %%%%%%%%%%%%%%%%%%%%%%%%%%%%%%%%%%%
%%%%%%%%%%%%%%%%%%%%%%%%%%%%%%%%%%%%%%%%%%%%%%%%%%%%%%%%%%%%%%%%%%%%%%%%%%%%%%

\title[Hilbert scheme of determinantal subvarieties]{On the Hilbert
  scheme of varieties \\ defined by maximal minors}

\author{Daniele Faenzi}
\email{{\tt daniele.faenzi@univ-pau.fr}}
\address{Universit\'e de Pau et des Pays de l'Adour \\
  Avenue de l'Universit\'e - BP 576 - 64012 PAU Cedex - France}
\urladdr{{\url{http://univ-pau.fr/~faenzi/}}}
\author{Maria Lucia Fania}
\email{{\tt fania@univaq.it}}
\address{Universit\`{a} degli Studi dell'Aquila \\
Via Vetoio Loc. Coppito\\ 67100 L'Aquila\\ Italy}
\urladdr{{\url{http://univaq.it/~fania/}}}

\keywords{Determinantal subvariety, Maximal minors, Hilbert scheme.}

\subjclass[2000]{Primary 14M12, 14C05; Secondary 14J10, 14M10}
%% 14M12    	Determinantal varieties
%% 14J10    	Families, moduli, classification: algebraic theory
%% 14M10    	Complete intersections
% 14E05, 14M12, 14N15, 14J40}

\thanks{D.F. partially supported by ANR contract Interlow
  ANR-09-JCJC-0097-0 and ANR GEOLMI.
M.L.F. partially supported by MIUR funds, PRIN project \lq\lq\thinspace Geometria delle
Variet\`a Algebriche''.}

\begin{abstract}
We compute the dimension of the Hilbert scheme of subvarieties
of positive dimension in
projective space which are cut by maximal minors
of a matrix with polynomial entries.
\end{abstract}

\maketitle

\section{Introduction}

A determinantal subvariety $X$ of the projective space  $\p^n$ is the
locus defined by the vanishing of all 
minors of a given order of a matrix $M$ of
homogeneous polynomials. % We will only consider varieties of positive dimension.
Many classical varieties can be constructed in this way, for instance
Segre and Veronese varieties, rational normal scrolls, Palatini
scrolls, Bordiga varieties and so forth.
The literature on the subject is rather vast, let us refer to the 
monographs \cite{northcott:finite-free, bruns-vetter:LNM,
weyman:tract,miro-roig:determinantal} and \cite[Lecture 9]{harris:ag}
for more on these classical objects.

If one attempts to parametrize all determinantal varieties of a given
type (i.e. for fixed degrees of the entries of $M$), a first step is
to look at $[X]$ as a point of a component $\sH$ of the Hilbert scheme of subscheme in $\p^n$, and study to what
extent the family of determinantal varieties fills in $\sH$.
In this spirit, Ellingsrud proved in \cite{ellingsrud:hilbert}
that determinantal varieties defined by maximal minors, in case of codimension $2$ and dimension $\ge 1$ are unobstructed and
their family is open and dense in $\sH$.
In the series of papers \cite{kleppe-migliore-miro-roig-nagel-peterson,
  kleppe-miro-roig:dimension, kleppe-miro-roig:families-PAMS}, 
the same behavior was established in many more cases, leading to
conjecture that this phenomenon should be general.

The goal of  this paper is to prove this conjecture in general, for determinantal
varieties defined by maximal minors, in the range of dimension at
least $1$ and codimension at least $2$.

\medskip

To state our main result properly we adopt now a more precise
language.
Given integers 
$\alpha_1 \leq \cdots \leq \alpha_a$,
$\beta_1 \leq \cdots \leq \beta_b$, and assuming $a \ge b$,
we consider a matrix  $M=(M_{i,j})$ of homogeneous polynomials of
degree $\alpha_j-\beta_i$ in $n+1$ variables, and 
we let $X$ be defined by all minors of order $b$ of $M$.
The subvariety $X$ sits in $\p^n$, and we let $p(t)$ be the Hilbert
polynomial of $X$.
Let us denote by $[X]$ the class of $X$ in the Hilbert scheme $\Hilb_{p(t)}({\p^n})$ parametrizing subschemes of
${\p^n}$ having Hilbert polynomial $p(t)$, and by $\sH$ the
irreducible component (or their union if there are more than one) of $\Hilb_{p(t)}({\p^n})$ containing $[X]$.
To parametrize all varieties defined by maximal minors, 
we define the bundles
$\sA = \oplus_{j=1,\ldots,a} \OO_{{\p^n}}(-\alpha_j)$, and $\sB =
\oplus_{i=1,\ldots,b} \OO_{{\p^n}}(-\beta_i)$, and we consider the vector space $\bW$ and the algebraic group $\G$:
\[
\bW = \Hom_{{\p^n}}(\sA,\sB), \qquad \G = \Aut_{\p^n}(\sA) \times \Aut_{\p^n}(\sB).
\]
A matrix $M$ corresponds this way to an element $\phi$ of $\bW$, and the associated
variety $X$ is the first degeneracy locus of $\phi$, hence we set
$M=M_\phi$, $X=X_\phi$.
An element $\rho = (g,h) \in \G$ acts on $\bW$ by
$\rho.\phi = g \circ \phi \circ h^{-1}$.
Of course, all elements in the $\G$-orbit $[\phi]$ of $\phi$ give the same
degeneracy locus $\phi$, since the ideal generated by minors of a
given order is invariant under change of basis.
There exists an open subset $\bW^\circ$ of $\bW$ such that $\bW^\circ /\G$ is a generically
smooth irreducible variety that we 
denote by $\sY$.

\smallskip

We consider thus  the natural rational map:
\begin{align*}
  F : \sY & {\dashrightarrow} \sH, &&  F : [\phi]  \mapsto [X_\phi].
\end{align*}

The main result of this note is the following analysis of the rational map $F$.

\begin{main*} 
  Choose integers $n$, $b \le a-1$, $\alpha_1 \leq \cdots \leq \alpha_a$
  and $\beta_1 \leq \cdots \leq \beta_b$, satisfying the following
  numerical condition:
  \begin{equation}
    \label{numerical}
  \mbox{$\alpha_i \ge \beta_{i+1}$, $\forall i=1,\ldots,b-1$},
  \quad \mbox{and $\alpha_i > \beta_i$, for some $i=1,\ldots,b$.}
  \end{equation}
  If $\phi$ is general enough, then:
  \[
  \dim(X_\phi)=n+b-a-1, \qquad \codim(\sing(X_\phi),X_\phi)\ge 3.
  \]
  Moreover the following holds:
  \begin{enumerate}[i)]
  \item \label{main-i} if $n+b-a-1 \ge 1$, then $F$ is
    generically finite, so  $\dim(\im(F))=\dim(\sY)$;
  \item \label{main-ii} if $n+b-a-1 \ge 2$, then $F$ is also dominant,
    in particular $\sH$ is an irreducible, generically smooth 
  variety and $\dim(\sH)=\dim(\sY)$; 
  \item \label{main-iii} if $\alpha_1>\beta_b$  and $n+b-a-1 \ge 2$, 
  then $F$ is birational.
  \end{enumerate}
\end{main*}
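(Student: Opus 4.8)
The plan is to deduce part \eqref{main-iii} from part \eqref{main-ii}. By \eqref{main-ii}, under $n+b-a-1\ge 2$ the map $F$ is dominant and generically finite between irreducible varieties of the same dimension, and such a map is birational as soon as its general fibre is a single point. A general fibre of $F$ over $[X_\phi]$ is
\[
F^{-1}([X_\phi]) = \{\,[\psi]\in\cY : X_\psi = X_\phi\,\},
\]
and since $\cY$ is, generically, the quotient of $\bW$ by $\G$, the equality $[\psi]=[\phi]$ in $\cY$ means precisely that $\psi = g\circ\phi\circ h^{-1}$ for some $(g,h)\in\G$. Thus I would reduce the statement to the following reconstruction claim: for general $\phi$, if $\psi\in\bW$ satisfies $X_\psi = X_\phi$ as subschemes of $\p^n$, then $\psi$ and $\phi$ lie in the same $\G$-orbit.

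To prove the claim I would pass to the cokernel sheaf $\cF := \cok(\phi)$, together with its presentation
\[
\sA \xrightarrow{\ \phi\ } \sB \longrightarrow \cF \longrightarrow 0.
\]
Since $\phi$ has maximal rank $b$ on the complement of $X_\phi$, the sheaf $\cF$ is a torsion sheaf supported on $X_\phi$, generically of rank one there, and its zeroth Fitting ideal is exactly the ideal $I_{X_\phi}$ of maximal minors. The heart of the argument is to show that $\cF$ is \emph{intrinsic} to the scheme $X_\phi$: using the Eagon--Northcott resolution of $\OO_{X_\phi}$ attached to $\phi$ together with local duality, I would identify $\cF$, up to a fixed twist by $\OO_{\p^n}(1)$, with the dualizing sheaf $\omega_{X_\phi}$, equivalently with $\EExt^{c}_{\p^n}(\OO_{X_\phi},\omega_{\p^n})$ up to twist, where $c=a-b+1$. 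This sheaf depends only on $X_\phi$ and its embedding, hence not on the chosen presentation. Here the hypothesis $\dim X_\phi=n+b-a-1\ge 2$ enters: it supplies the depth / $S_2$ estimates guaranteeing that the graded module $\cok(\phi)$ coincides with the saturated module $\bigoplus_m \HH^0(\cF(m))$, so the identification holds at the level of modules and not merely of sheaves.

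With this in hand the reconstruction is forced. Because $\beta_b<\alpha_1$, every entry of $M_\phi$ has degree $\alpha_j-\beta_i\ge \alpha_1-\beta_b>0$ and so lies in the irrelevant maximal ideal; the same holds for $M_\psi$, since $\psi\in\bW$ uses the same $\sA,\sB$. Hence both presentations $\sA\xrightarrow{\phi}\sB\to\cF\to 0$ and $\sA\xrightarrow{\psi}\sB\to\cF'\to 0$ are minimal. If $X_\psi=X_\phi$, the previous step gives $\cF'\cong\cF$, and uniqueness of the minimal free resolution of a graded module (graded Nakayama) produces an automorphism $g$ of $\sB$ and an automorphism $h$ of $\sA$ with $\psi = g\circ\phi\circ h^{-1}$; this is an element of $\G$ carrying $\phi$ to $\psi$, so $[\psi]=[\phi]$ in $\cY$. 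Therefore the general fibre of $F$ is a single point and $F$ is birational.

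I expect the main obstacle to be the intrinsic identification of $\cF$ with a twist of $\omega_{X_\phi}$ at the level of modules. Two points need care: first, that the Eagon--Northcott complex is exact and computes $\omega_{X_\phi}$ correctly, which rests on $X_\phi$ having the expected codimension and being Cohen--Macaulay; second, the comparison between the possibly non-saturated module $\cok(\phi)$ and the saturated module of the sheaf $\cF$, where the hypothesis $\dim X_\phi\ge 2$ is exactly what removes the low-degree discrepancy. A secondary verification is that the isomorphism of minimal presentations furnished by graded Nakayama is realized by genuine elements of $\Aut_X(\sA)$ and $\Aut_X(\sB)$, that is, that these groups account for all the freedom; this is immediate once one observes that the graded automorphisms of $\bigoplus_j\OO_{\p^n}(-\alpha_j)$ are precisely the invertible homogeneous matrices compatible with the grading.
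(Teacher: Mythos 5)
There are two genuine gaps. First, your proposal only addresses part \eqref{main-iii}: parts \eqref{main-i} and \eqref{main-ii} are taken for granted, but they are the bulk of the theorem. Part \eqref{main-ii} in particular is not formal: the paper proves it by bounding $\hh^0(X_\phi,\sN_{X_\phi,\p^n})$ from above by $\dim(\cY)$ (via the identification $i_*(\sN)\cong\EExt^1_{\p^n}(\sC_\phi,\sC_\phi)$ and a Buchsbaum--Rim computation), and combining this with the generic finiteness of $F$ from part \eqref{main-i}. Nothing in your text produces either of these inputs, and \eqref{main-i} itself (finiteness of determinantal representations up to conjugacy) requires an argument.

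Second, and more seriously, the reconstruction claim at the heart of your proof of \eqref{main-iii} is false in codimension $c\ge 3$. You propose to show that $\sC_\phi=\cok(\phi)$ is intrinsic to $X_\phi$ by identifying it, up to a twist by $\OO_{\p^n}(1)$, with $\omega_{X_\phi}\cong\EExt^c_{\p^n}(\OO_{X_\phi},\omega_{\p^n})$. But the Eagon--Northcott/Buchsbaum--Rim duality gives
\[
\omega_{X_\phi}\;\cong\;\sL_\phi^{\ts(a-b)}\ts\OO_{X_\phi}\bigl((\ell-n-1)H_{X_\phi}\bigr),
\qquad \ell=\textstyle\sum_j\alpha_j-\sum_i\beta_i,
\]
(this is Lemma \ref{K} of the paper), so the dualizing sheaf only determines the $(a-b)$-th tensor power of $\sL_\phi$. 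Your identification holds precisely when $a-b=1$, i.e.\ $c=2$ (the Hilbert--Burch/Ellingsrud case); already for a rational normal curve, $\sL_\phi$ is $\OO_{\p^1}(1)$ while $\omega_X(tH)$ runs through $\OO_{\p^1}(td-2)$, so no twist of $\omega_X$ recovers $\sL_\phi$. For $c\ge 3$ the sheaf $\sL_\phi$ is only pinned down up to an $(a-b)$-torsion element of $\cl(X_\phi)$, which is exactly why the theorem's part \eqref{main-i} asserts only \emph{finiteness}. To get uniqueness in \eqref{main-iii} one must kill this torsion: the paper does so by realizing $X_\phi$ as the image of the complete intersection $Y_\phi\subset\p(\sB)$ cut by the divisors $\alpha_jH_\cP+P$, which are very ample precisely because $\beta_b<\alpha_1$, and invoking Grothendieck--Lefschetz to conclude that $\Pic(Y_\phi)$ is torsion-free when $\dim(X_\phi)\ge 2$. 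This step has no counterpart in your argument, and without it the graded-Nakayama endgame (which is otherwise fine, and close in spirit to the paper's Lemma \ref{transitive}) never gets off the ground because you cannot conclude $\sC_{\phi'}\cong\sC_\phi$ from $X_{\phi'}=X_\phi$.
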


As recalled above, this result was motivated by a conjecture of Kleppe and
Mir\'o-Roig, see \cite[Conjecture
4.2]{kleppe-miro-roig:families-PAMS}, rooted in early work of
Ellingsrud.
This conjecture is solved by part
\eqref{main-ii} above.

What we show is in fact stronger since \eqref{main-iii} proves uniqueness of
determinantal representations in the range $\alpha_1 > \beta_b$.
Further, part \eqref{main-i} above addresses the positive-dimensional
range of \cite[Conjectures 4.1]{kleppe-miro-roig:families-PAMS}.
Our result thus completes 
\cite{kleppe-miro-roig:dimension, 
  kleppe-migliore-miro-roig-nagel-peterson,
  kleppe-miro-roig:families-PAMS,
  kleppe:low-JPAA}, 
where these conjectures are addressed for several ranges of the $\alpha_j$'s
and $\beta_i$'s.
  
  One should be aware that the dimension $\dim(\sY)$ can be calculated
  explicitly as a function of the $\alpha_j,\beta_i$'s and $n$.
  Indeed we set $c=a-b+1$, and, according to
  \cite{kleppe-miro-roig:families-PAMS}, we define $\lambda_c$ as:
  \begin{align} \label{lambdac}
    \lambda_c = & \sum_{\substack{j=1,\ldots,a \\ i=1,\ldots,b}}
  {\alpha_j-\beta_i+n\choose
    n}+{\beta_i-\alpha_{j}+n\choose n} - \\
  \nonumber - & \sum_{i,j=1,\ldots,a}{\alpha_i-\alpha_j+n\choose n}
  -\sum_{i,j=1,\ldots,b}{\beta_{i}-\beta_j+n\choose n}+1.
  \end{align}
  Further we define, for $i=3,\ldots,c$ the integers:
  \[  \ell_i= \sum_{j=1,\ldots,b+i-1} \alpha_j - \sum_{i=1,\ldots,b}
  \beta_i, \qquad h_{i-3}=2 \alpha_{b+i-1}- \ell_i+n.
  \]
  Finally, for $i=0,\ldots,c-3$, the integers $K_{i+3}$ are defined by:
  \begin{equation}
    \label{Ki}
    \sum_{\substack{r+s=i \\ r,s \ge 0}} \sum_{\substack{1\le i_1< \cdots <i_r\le b+i+1, \\
        1\le j_1\le \cdots \le j_s\le b}} (-1)^{i-r}
    {h_i+\alpha_{i_1}+\cdots+\alpha_{i_r}+\beta_{j_1}+\cdots+\beta_{j_s}\choose{n}} 
  \end{equation}
  In these terms, the dimension of $\sY$ is:
  \[
  \dim(\sY)=\lambda_c+K_3+\cdots+K_c.
  \]
  Our theorem thus says that the closure of $\sH$ in
  $\Hilb_{p(t)}(\p^n)$ is an irreducible variety of dimension
  $\lambda_c+K_3+\cdots+K_c$, and in fact an irreducible component of
  $\Hilb_{p(t)}(\p^n)$ if $\dim(X_\phi)=n+b-a-1 \ge 2$.
  The following addresses \cite[Conjecture 3.2]{kleppe:low-JPAA}.
  \begin{mcorol*} 
    Let $b \le a-1$, $d \geq 1$ be integers and set $\alpha_j =
    d, \beta_i = 0$ for all $i,j$, and assume $\dim(X_\phi)\ge 2$, i.e. 
    $n+ b-a-1 \ge 2$.
    Then the map $F$ is birational.
    In particular, $\sH$ is an irreducible, generically smooth
    variety of dimension:
    \[
    \dim(\sH) = a b {n+d
      \choose{n}} - a^2-b^2+1.
    \]
\end{mcorol*}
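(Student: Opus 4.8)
The plan is to deduce the corollary directly from the Main Theorem, so that the only real work is to evaluate the formula $\dim(\cY)=\lambda_c+K_3+\cdots+K_c$ in the homogeneous case $\alpha_j=d$, $\beta_i=0$. First I would verify the numerical hypothesis \eqref{numerical}: here $\alpha_i=d\ge 0=\beta_{i+1}$ for all $i$, and $\alpha_i=d>0=\beta_i$ for every $i$ because $d\ge 1$. Moreover $\beta_b=0<d=\alpha_1$, so under the standing assumption $\dim(X_\phi)=n+b-a-1\ge 2$ both \eqref{main-ii} and \eqref{main-iii} apply: the former gives that $\cH$ is irreducible, generically smooth with $\dim(\cH)=\dim(\cY)$, and the latter gives that $F$ is birational. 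It therefore remains only to compute $\dim(\cY)$.

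Next I would evaluate $\lambda_c$ from \eqref{lambdac}, substituting $\alpha_j=d$, $\beta_i=0$ and using throughout the convention $\binom{m}{n}=0$ for $m<n$, so that $\binom{k+n}{n}=h^0(\OO_{\p^n}(k))$. The \emph{positive} cross terms contribute $\binom{\alpha_j-\beta_i+n}{n}=\binom{d+n}{n}$ over the $ab$ pairs $(i,j)$, for a total of $ab\binom{n+d}{n}$; the \emph{reflected} terms $\binom{\beta_i-\alpha_j+n}{n}=\binom{n-d}{n}$ all vanish since $d\ge 1$; and the two correction sums collapse to $\sum_{i,j=1}^{a}\binom{n}{n}=a^2$ and $\sum_{i,j=1}^{b}\binom{n}{n}=b^2$. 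With the final $+1$ this gives $\lambda_c=ab\binom{n+d}{n}-a^2-b^2+1$, already the asserted value.

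The crux is then to show $K_3+\cdots+K_c=0$. Substituting the homogeneous values into \eqref{elli} gives $\ell_{i+3}=(b+i+2)d$, hence $h_i=2\alpha_{b+i+2}-\ell_{i+3}+n=n-(b+i)d$ for $i=0,\ldots,c-3$. In each summand of \eqref{Ki} one has $\alpha_{i_1}+\cdots+\alpha_{i_r}=rd$ and $\beta_{j_1}+\cdots+\beta_{j_s}=0$, so the binomial coefficient equals $\binom{n-(b+i-r)d}{n}$. Since $r+s=i$ with $s\ge 0$, we have $b+i-r=b+s\ge b\ge 1$, so its top entry $n-(b+s)d\le n-d\le n-1$ is strictly below $n$ and the coefficient is zero. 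Hence every term of every $K_{i+3}$ vanishes, so $K_3=\cdots=K_c=0$ and $\dim(\cY)=\lambda_c$, completing the computation.

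I expect the only genuine obstacle to be bookkeeping rather than mathematics: correctly tracking the shifted indices relating \eqref{elli} to \eqref{Ki} (the passage from $h_{i-3}$ to $h_i$), and being uniformly consistent with the convention $\binom{m}{n}=0$ for $m<n$, on which both the vanishing of the reflected $\lambda_c$-terms and of all the $K_i$ depend. Once these are fixed the evaluation is purely mechanical, all the geometric substance being supplied by the Main Theorem.
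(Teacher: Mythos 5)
Your proposal is correct and follows the same route the paper (implicitly) takes: the corollary is obtained by applying parts (ii) and (iii) of the Main Theorem, after checking \eqref{numerical} and $\beta_b<\alpha_1$, and then evaluating $\dim(\cY)=\lambda_c+K_3+\cdots+K_c$ for $\alpha_j=d$, $\beta_i=0$. Your bookkeeping is accurate: $\lambda_c=ab\binom{n+d}{n}-a^2-b^2+1$ and every $K_{i+3}$ vanishes because each top entry $n-(b+s)d\le n-d<n$ under the convention $\binom{m}{n}=0$ for $m<n$, which is exactly the computation the paper leaves to the reader.
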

The result above is related to \cite{faenzi-fania:palatini,tanturri:degeneracy}, where the Hilbert
scheme of Palatini scrolls is described.
Shortly before submitting our paper we learned of a preprint by
Kleppe \cite{kleppe:mannaggia}, addressing very similar questions.

\subsection{Structure of the paper}

In Section \ref{basic} we review some basic material. 
In Section \ref{normale} we calculate the sections of the normal
sheaf of $X_\phi$, whereby giving an estimate on the dimension of $\sH$.
Section \ref{fibres} contains our main lemmas on the fibres of the map
$F$, and the proof of the main result.

\subsection{Notations and conventions}
Let $\kk$ be an algebraically closed field, $n\ge 2$ be an integer and let
$V$ be an $(n+1)$-dimensional vector space over $\kk$.
We consider the projective space ${\p^n}$ of $1$-dimensional quotients
of $V$. Under this convention we have, for any $d \ge 0$, a natural identification
$\HH^0({\p^n},\OO_{{\p^n}}(d))\cong S^d V$, the $d$-th symmetric power
of $V$. 

If $Z$ is a variety equipped with a morphism $f: Z \to {\p^n}$, 
the symbol $H_Z$ will denote $f^*(H_{\p^n})$, with 
 $H_{\p^n}=c_1(\OO_{{\p^n}}(1))$.
Given a morphism $\phi$ of vector bundles on a variety $Z$, 
 $\rD_k(\phi)$ will denote the locus consisting of the points $z$
of $Z$ such that $\phi_z$ has rank at most $k$,
so $\rD_k(\phi)$ is cut locally by all $(k+1)$-minors of a matrix
defining $\phi$.

We will write $R =k[x_0,\ldots,x_n] \cong S V$ for
the polynomial ring. If $Z$ is a subvariety of $\p^n$, we denote by
$R_Z$ its coordinate ring.
We write $\tilde{A}$ for the sheafification of a module $A$ over
$R_Z$.
Given a coherent sheaf $\sE$ on $Z$ we denote by
$\HH^0_*(\sE)$ the $R_Z$-module of global sections $\bigoplus_{t \in
  \Z}\HH^0(Z,\sE(t))$. If $\phi : \sE \to \sE'$ is a morphism of
coherent sheaves over $Z$, then $\HH^0_*(\phi)$ will denote the induced
morphism of $R_Z$-modules of global sections.
We write $A^*=\Hom_{R_Z}(A,R_Z)$ and $\sE^*=\HHom_Z(\sE,\OO_Z)$.
We will denote by $\sN_{Z',Z}$ the normal sheaf of  a subvariety $Z'$ in $Z$.
The shortcut $\hh^i(X,\sE)$ will be used for $\dim_{\kk} \HH^i(X,\sE)$.
We also use $\p(\sE)$ for the projective bundle associated with
$\sE$, and $S^d \sE$ for $d$-th symmetric power of $\sE$.

\section{Basic constructions} \label{basic}

The material contained in this section is well-known,
we refer to the 
monographs \cite{northcott:finite-free, bruns-vetter:LNM,
  weyman:tract, miro-roig:determinantal}.
Let us fix some notation and keep it throughout the paper.
Given integers $b \le a-1$,  $\alpha_1 \leq \cdots \leq \alpha_a$
and $\beta_1 \leq \cdots \leq \beta_b$, we define:
\[
\sA = \bigoplus_{j=1,\ldots,a} \OO_{{\p^n}}(-\alpha_j), \qquad \sB =
\bigoplus_{i=1,\ldots,b} \OO_{{\p^n}}(-\beta_i), \qquad \phi : \sA \to \sB.
\]
We write $X_\phi$ for the first degeneracy locus of $\phi$, so
$X_\phi = \rD_{b-1}(\phi)$, with the scheme-theoretic structure given
by the $b \times b$ minors of $\phi$.
The expected codimension of $X_\phi$ in $\p^n$ is $\max(n,a-b+1)$.
If $\phi$ is general enough, the actual codimension of $X_\phi$ is at
least this number if and only if (cf. \cite{kleppe-miro-roig:dimension}):
\begin{equation}
  \label{standard}
    \mbox{$\alpha_i \ge \beta_i$, for all $i=1,\ldots,b$,} \qquad
  \mbox{and $\alpha_i > \beta_i$, for some $i=1,\ldots,b$.}
\end{equation}

\subsection{The map {\it F}}

Let $\sA$ and $\sB$ be as above.
The following lemma provides a description of the
$\G$-orbit space of $\bW$ suitable for our purpose.

\begin{lem} \label{dimY}
  There is a generically smooth irreducible variety
  $\sY$ parametrizing generic
  $\G$-orbits of $\bW$, with:
  \begin{equation}
    \label{dimension}
    \dim(\sY)= \lambda_c + K_3+\cdots+K_c.    
  \end{equation}
\end{lem}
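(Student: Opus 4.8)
The plan is to present $\cY$ as a geometric quotient of a dense open subset of the affine space $\bW$ and to extract its dimension from that of a general orbit. Since $\G$ is a connected linear algebraic group acting algebraically on the smooth irreducible variety $\bW$, Rosenlicht's theorem furnishes a $\G$-invariant dense open $\bW^0\subseteq\bW$ carrying a geometric quotient $\cY=\bW^0/\G$; being the image of the irreducible $\bW^0$, the variety $\cY$ is irreducible. Granting the stabilizer computation of the next paragraph, the homotheties $(\lambda\,\idd_\sA,\lambda\,\idd_\sB)$ form a central $\mathbb{G}_m\subseteq\G$ which is the full stabilizer of a general $\phi$; hence $\G/\mathbb{G}_m$ acts freely on a dense open, the quotient morphism is a torsor there, and $\cY$ is generically smooth. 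This disposes of every assertion except the dimension formula.

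For the dimension I would write $\dim\cY=\dim\bW-d$, where $d$ is the dimension of the orbit through a general $\phi$. The tangent space to that orbit is the image of
\[
\Psi:\End(\sA)\oplus\End(\sB)\longrightarrow\Hom(\sA,\sB),\qquad \Psi(u,v)=\phi\circ u-v\circ\phi,
\]
so $d=\dim\im\Psi$ and $\dim\cY=\dim\cok\Psi$. The kernel of $\Psi$ is the Lie algebra of the stabilizer of $\phi$ and always contains the homotheties. The decisive point is that for general $\phi$ these are the only intertwiners, i.e. $\phi u=v\phi$ forces $(u,v)$ scalar --- a Schur-type statement expressing the simplicity of the complex $[\sA\xrightarrow{\phi}\sB]$. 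Thus $\dim\Ker\Psi=1$ and
\[
\dim\cY=\dim\Hom(\sA,\sB)-\dim\End(\sA)-\dim\End(\sB)+1.
\]

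It remains to match the right-hand side with $\lambda_c+K_3+\cdots+K_c$. Because $\sA$ and $\sB$ are direct sums of line bundles, each dimension above is a sum of values $h^0(\OO_{\p^n}(d))=\binom{d+n}{n}$ over the shifts $\alpha_j-\beta_i$, $\alpha_i-\alpha_j$, $\beta_i-\beta_j$. Replacing these actual dimensions by the corresponding Euler characteristics $\chi(\OO_{\p^n}(d))$ and symmetrising produces precisely the expression $\lambda_c$ of \eqref{lambdac}; the gap between the two is therefore a combination of the wrong-way homomorphisms in $\Hom(\sB,\sA)$ and of the top cohomology groups $H^n$ of the twisted line bundles that occur. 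I expect this last identification to be the main obstacle: the quotient construction and the stabilizer computation are formal, but evaluating that gap --- through the length-$c$ Eagon--Northcott/Buchsbaum--Rim resolution of the determinantal ideal, whose deeper terms furnish exactly $K_3,\dots,K_c$ --- is the delicate homological and combinatorial bookkeeping carried out in \cite{kleppe-miro-roig.families-arxiv}, which I would reproduce.
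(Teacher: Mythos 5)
Your construction of $\cY$ via Rosenlicht's theorem and the reduction of the dimension count to $\dim\bW-\dim\G+\dim\G_\phi$ match the paper's proof, which likewise invokes Rosenlicht and then cites the literature for the stabilizer dimension. The genuine gap is your stabilizer computation: the ``Schur-type'' claim that $\phi u=v\phi$ forces $(u,v)$ scalar for general $\phi$, hence $\dim\Ker\Psi=1$, is false in general. For any $\tau\in\Hom_{\p^n}(\sB,\sA)$ the pair $(u,v)=(\tau\phi,\phi\tau)$ is an intertwiner, since $\phi\,(\tau\phi)=(\phi\tau)\,\phi$; for general $\phi$ the assignment $\tau\mapsto(\tau\phi,\phi\tau)$ is injective ($\tau\phi=0$ forces $\tau$ to factor through the torsion sheaf $\sC_\phi$, hence $\tau=0$) and meets the homotheties only in $0$ (a nonzero scalar would make $\phi$ a split monomorphism, impossible as $a>b$). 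Hence $\dim\Ker\Psi\ge 1+\dim\Hom(\sB,\sA)$, and $\Hom(\sB,\sA)\neq 0$ whenever some $\beta_i\ge\alpha_j$, which \eqref{numerical} permits (e.g.\ $\beta_b\ge\alpha_1$). This is exactly why $\lambda_c$ in \eqref{lambdac} carries the extra summand $\sum_{i,j}\binom{\beta_i-\alpha_j+n}{n}=\dim\Hom(\sB,\sA)$; it is not an artifact of replacing $h^0$ by Euler characteristics, since with the convention $\binom{m}{n}=0$ for $m<n$ every binomial occurring there already equals the relevant $h^0$.

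Consequently your intermediate formula $\dim\cY=\dim\Hom(\sA,\sB)-\dim\End(\sA)-\dim\End(\sB)+1$ undercounts by $\dim\Hom(\sB,\sA)+K_3+\cdots+K_c$, and the final ``matching'' paragraph cannot repair this as described: the missing quantity is precisely the excess of $\dim\G_\phi$ over $1$, whose evaluation (the wrong-way maps plus the contributions $K_3,\dots,K_c$ of the deeper terms of the Buchsbaum--Rim complex) is the substance of \cite{kleppe-migliore-miro-roig-nagel-peterson,kleppe-miro-roig:dimension}, which the paper cites at exactly this point. If you wish to defer that bookkeeping to the references, you must defer the stabilizer computation itself and delete the assertion $\dim\Ker\Psi=1$, which contradicts it; as written, the two halves of your argument are incompatible except in the special case $\Hom(\sB,\sA)=0$ and $K_3=\cdots=K_c=0$ (e.g.\ the homogeneous case of the Corollary).
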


\begin{proof}
The group $\G$ is in general non-reductive.
However, we will only be interested in some open piece of the orbit
space.
According to a result of Rosenlicht, see \cite{rosenlicht:remark}, there is a dense open subset
$\bW^\circ$ such that the quotient $\bW^\circ /\G$ is geometric.
Let us denote:
\[
\sY = \bW^\circ /\G.
\]
This is a generically smooth variety of dimension:
\[
\dim(\sY)=\dim(\bW)-\dim(\G)+\dim(\G_{\phi}),
\]
where $\G_\phi$ is the stabilizer of a general element $\phi \in \bW^\circ$.
The dimension of $\G_{\phi}$ is computed in 
\cite{kleppe-migliore-miro-roig-nagel-peterson,
  kleppe-miro-roig:dimension}, and we get \eqref{dimension}.
\end{proof}

\subsection{Cokernel of a matrix with polynomial entries}
\label{coker}

Let $\phi : \sA \to \sB$ be as above, and assume \eqref{standard}.
We  assume from now on that $X_\phi$ has codimension $c=a-b+1$, so:
\[
\dim(X_\phi)=n+b-a-1.
\]
\subsubsection{Cokernel sheaf} We define the sheaf:
\[
 \sC_\phi = \cok(\phi).
\]
Note that $\sC_\phi$ is supported on $X_\phi$. Let $i$ denote the
embedding of $X_\phi$ in ${\p^n}$. Then $\sC_\phi \cong i_*(\sL_\phi)$,
for a sheaf $\sL_\phi$ on $X_\phi$ of (generic) rank $1$.
The sheaf $\sL_\phi$ is ACM on $X_\phi$ i.e.,
$\HH^0_*(X_\phi,\sL_\phi)$ is a maximal Cohen-Macaulay module over
$R_{X_\phi}$.
In particular, $\sL_\phi$ is
reflexive, hence invertible if $X_\phi$ is integral and locally factorial.

Given the sheaf $\sL_\phi$, we will consider $c_1(\sL_\phi)$, as a divisor class in $\cl(X)$
by looking at the zero locus $D_\phi$ of $\sL_\phi(t)$
(where we choose the smallest $t \in
\Z$ such that  $\HH^0(X_\phi,\sL_\phi(t)) \neq 0$).
This locus is in fact a determinantal subvariety $X_{\phi_0}
\subset \p^n$
with $\phi_0 : \sA \to \sB_0$ and $\sB_0 = \sB/\OO_{\p^n}(-t)$.
Note also that the class of $D_\phi$ determines $\sL_\phi$. Indeed we
have an exact sequence:
\[
0 \to \sL_\phi^*(-t) \to \OO_{X_\phi} \to \OO_{D_\phi}
\to 0,
\]
and this gives back $\sL_\phi$ since $\sL_\phi ^{**} \cong \sL_\phi$.

\subsubsection{Cokernel module} In terms of $R$-modules, we define:
\[
A = \bigoplus_{j=1,\ldots,a} R(-\alpha_j), \qquad B = \bigoplus_{i=1,\ldots,b} R(-\beta_i).
\]
So $\phi$ gives a morphism $\HH^0_*(\phi) : A \to B$, whose
sheafification is $\phi$.
We define:
\[C_\phi = \cok(\HH^0_*(\phi)).\]
This is a graded $R$-module. Considered as $R_X$-module, $C_\phi$ is a maximal Cohen-Macaulay
module, and we have $C_\phi \cong \HH^0_*(\sC_\phi)$ and 
$\tilde{C}_\phi \cong \sC_\phi$, see for instance \cite{miro-roig:determinantal}.

\subsection{Determinantal subvarieties as complete intersections}

For the constructions in this subsection we refer for instance to \cite{ein:determinantal,weyman:tract}.
Consider the vector bundle $\sB = \bigoplus_{i=1,\ldots,b}
\OO_{{\p^n}}(-\beta_i)$ over the projective space ${\p^n}$ and define
the projective bundle:
\[
\sP = \p(\sB) \xr{\pi} {\p^n}.
\]
We have the relatively ample line bundle $\OO_\sB(1)$ and we let $
P = c_1(\OO_\sB(1)).$
Set $\sT_\sB$ for the relative tangent bundle.
Each $\phi \in \bW$ gives a section $s_\phi \in
\HH^0(\sP,\pi^*(\sA)^*(P))$ in view of:
\[
\bW = \Hom_{{\p^n}}(\sA,\sB) \cong \Hom_{\sP}(\pi^*(\sA),\OO_{\sP}(P)) \cong
\bigoplus_{i,j}
  S^{\alpha_j - \beta_i} V. 
\]
Denoting by $\V(s_\phi)$ the vanishing locus of $s_\phi$, we define
the complete intersection subvariety:
\[
Y_\phi = \V(s_\phi) \subset \sP.
\]

To see the relation between $X_\phi$ and $Y_\phi$, we consider the scheme
$\p(\sL_\phi)$, and we let $q$ be the natural map $\p(\sL_\phi)\to {\p^n}$.
Since $i_*(\sL_\phi)$ is a quotient of $\sB$, we have a natural closed embedding
$p : \p(\sL_\phi) \mono \sP.$
Set $P_{Y_\phi}=P|_{Y_\phi}$.
%so that $\OO_{Y_{\phi}}(P_{Y_\phi}) \cong p^*(\OO_\sP(P))$.

The following lemma is certainly well-known, although we haven't been
able to find the precise statement in the literature.
However, we provide a proof for the reader's convenience.

\begin{lem} \label{proj}
  Choose any $\phi \in \bW$ with $X_\phi \neq \emptyset$. Then
  $\p(\sL_\phi) \cong Y_\phi$ as schemes, and we have:
  \begin{equation}
    \label{qp}
    \sL_\phi \cong q_*(\OO_{Y_{\phi}}(P_{Y_\phi})), \qquad q(Y_\phi)=X_\phi.
  \end{equation}
  If the further degeneracy locus $\rD_{b-2}(\phi)$ is empty, then $q:\p(\sL_\phi) \to {\p^n}$
  is an isomorphism onto $X_\phi$, so $Y_\phi \cong X_\phi$.
\end{lem}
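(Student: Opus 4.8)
The plan is to identify the complete intersection $Y_\phi=\V(s_\phi)$ with the projectivization $\p(\sC_\phi)$ of the cokernel sheaf through the universal property of $\cP=\p(\sB)$, then to use $\sC_\phi\cong i_*(\sL_\phi)$ to rewrite $\p(\sC_\phi)$ as $\p(\sL_\phi)$, and finally to read all the stated isomorphisms off the tautological quotient on $\p(\sL_\phi)$.

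First I would recall that $\cP$ represents the functor of line-bundle quotients of $\sB$: for $g\colon T\to\p^n$, a $\p^n$-morphism $T\to\cP$ is the datum of a line-bundle quotient $g^*\sB\epi L$, and the tautological quotient $u\colon\pi^*\sB\epi\OO_\cP(P)$ is the counit of this adjunction (this is what underlies $\pi_*\OO_\cP(P)\cong\sB$ in \eqref{tangente-relativo}). Under $\bW\cong\Hom_\cP(\pi^*\sA,\OO_\cP(P))$ the section $s_\phi$ is then the composite $\pi^*\sA\xr{\pi^*\phi}\pi^*\sB\xr{u}\OO_\cP(P)$. Pulling back to $T$, the vanishing $g^*s_\phi=0$ says precisely that the quotient $g^*\sB\epi L$ kills $\im(g^*\phi)$, i.e. factors through $\cok(g^*\phi)=g^*\sC_\phi$. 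Hence the subfunctor of $T$-points of $\cP$ cut out by the closed subscheme $Y_\phi$ coincides with the subfunctor of quotients factoring through $g^*\sC_\phi$, which is the one represented by the closed subscheme $\p(\sC_\phi)\mono\cP$. By Yoneda this forces $Y_\phi=\p(\sC_\phi)$ as closed subschemes of $\cP$. I regard this functorial step as the heart of the matter, since it pins down the scheme structure (including any nilpotents) rather than a mere set-theoretic rank count.

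Next I would pass from $\sC_\phi$ to $\sL_\phi$. Since $\sC_\phi\cong i_*(\sL_\phi)$ and the symmetric algebra commutes with pushforward along the closed immersion $i$, one has $\Sym_{\OO_{\p^n}}(\sC_\phi)\cong i_*\Sym_{\OO_{X_\phi}}(\sL_\phi)$, so $\p(\sC_\phi)=\mathrm{Proj}_{\p^n}\Sym(\sC_\phi)$ is canonically $\p(\sL_\phi)$, with $p$ the closed embedding induced by $\sB\epi\sC_\phi$; this gives the identification $\p(\sL_\phi)=Y_\phi$. Writing $q=i\circ\rho$ with $\rho\colon\p(\sL_\phi)\to X_\phi$, the restriction $\OO_{Y_\phi}(P_{Y_\phi})=p^*\OO_\cP(P)$ is exactly the tautological quotient $\OO_{\p(\sL_\phi)}(1)$, so the standard identity $\rho_*\OO_{\p(\sL_\phi)}(1)\cong\sL_\phi$ yields $q_*\OO_{Y_\phi}(P_{Y_\phi})\cong\sL_\phi$ (up to the harmless $i_*$), proving \eqref{qp}. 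The equality $q(Y_\phi)=X_\phi$ is then immediate, since $\p(\sL_\phi)$ lives over $\supp(\sC_\phi)$, which is exactly $\rD_{b-1}(\phi)=X_\phi$.

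Finally, under the hypothesis $\rD_{b-2}(\phi)=\emptyset$ the evaluation $\phi_x$ has rank exactly $b-1$ at every point $x\in X_\phi$, so $\sC_\phi\otimes k(x)$ is one-dimensional there. A local computation then shows $\sC_\phi$ is cyclic near each such point, so that $\sL_\phi$ is a line bundle on $X_\phi$; for an invertible $\sL_\phi$ the projection $\rho\colon\p(\sL_\phi)\to X_\phi$ is an isomorphism, and therefore $q$ identifies $Y_\phi=\p(\sL_\phi)$ with $X_\phi\subset\p^n$. The main obstacle I anticipate is the first step, namely upgrading the identification $Y_\phi=\p(\sC_\phi)$ from the level of points to the level of schemes; the functor-of-points argument above, together with the compatibility of both embeddings into $\cP$ with the tautological quotient $u$, is what I would rely on to settle it. The line-bundle claim in the last part is a secondary technical point, resting on the corank-one structure of the cokernel of $\phi$ along $X_\phi$.
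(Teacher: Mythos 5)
Your proof is correct and follows essentially the same route as the paper's: the identification of $Y_\phi$ with $\p(\sL_\phi)$ rests in both cases on the observation that a quotient $\gamma$ of $\sB_\xi$ descends to $\sL_{\phi,\xi}$ exactly when $\gamma\circ\phi_\xi=0$, i.e.\ when $s_\phi$ vanishes --- the paper checks this on points, while your functor-of-points phrasing is a (welcome) refinement that pins down the scheme structure. The remaining steps, namely \eqref{qp} via $p^*\OO_\cP(P)\cong\OO_{\p(\sL_\phi)}(1)$ and the final isomorphism via local freeness of $\sL_\phi$ when $\rD_{b-2}(\phi)=\emptyset$ (for which the paper cites Pragacz rather than your corank-one local computation), coincide with the paper's argument.
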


\begin{proof}
  The scheme $\p(\sL_\phi)$ consists of the pairs $([\xi],[\gamma])$
  where $\xi : V \to \kk$ represents a $1$-dimensional quotient of $V$
  and the proportionality class of $\gamma$ lies in
  $\p(\sL_{\phi,\xi})$. Since $i_*(\sL_\phi)$ 
  is defined as $\cok(\phi)$,
  we have that $\gamma$ is a quotient of $\sL_{\phi,\xi}$ fitting into:
  \[
  \sA_{\xi} \xr{\phi_{\xi}} \sB_{\xi} \to \sL_{\phi,\xi}.
  \]
  Lifting $\gamma$ to a map $\sB_{\xi} \to \kk$ (still denoted by $\gamma$), we get
  that $\gamma$ is defined on $\sL_{\phi,\xi}$ if and only if $\gamma
  \circ \phi_{\xi} = 0$.
  Clearly, we have:
  \[
  \gamma \circ \phi_{\xi} = 0 \Leftrightarrow \gamma(\phi_{\xi}(e)) = 0, \forall
  e \in \sA_{\xi}.
  \]
  Summing up we have:
  \begin{equation}
    \label{PEphi}
    \p(\sL_\phi) = \{([\xi],[\gamma]) \, | \, \gamma(\phi_{\xi}(e)) = 0, \forall
    e \in \sA_{\xi} \}.
  \end{equation}
  
  On the other hand, $Y_\phi$ consists of pairs $([\xi],[\gamma])$
  such that $\gamma$ is a quotient of $\sB_\xi$ and
  $s_\phi$ vanishes at $([\xi],[\gamma])$.
  By definition of $s_\phi$, its evaluation
  $s_{\phi,([\xi],[\gamma])}$ at a pair $([\xi,\gamma])$ is given as the composition:
  \[
  \sA_\xi \xr{\phi_\xi} \sB_\xi \xr{\gamma} \OO_\xi(P) \cong \kk.
  \]
  Therefore, we have:
  \[
  Y_\phi = \{([\xi],[\gamma]) \, | \, s_{\phi,([\xi],[\gamma])}(e)=\gamma(\phi_{\xi}(e)) = 0, \forall
    e \in \sA_{\xi} \}.
  \]
  This agrees with \eqref{PEphi}, so our first statement is proved.

  To check \eqref{qp} we note that, since $p$ is  induced by the
  projection $\sB \to i_*(\sL_\phi)$, there is an isomorphism:
  \[
  p^*(\OO_{\sP}(P)) \cong \OO_{\sL_\phi}(1).
  \]
  Clearly we have:
  \[
  q_*(\OO_{\sL_\phi}(1)) \cong \sL_\phi.
  \]
  This proves \eqref{qp}.

  To check the last statement, first note that since $\sL_\phi$ is supported on $X_\phi$ the map $q$ takes value in $X_\phi$.
  When $\rD_{b-2}(\phi) = \emptyset$, the sheaf $\sL_\phi$ is locally
  free of rank one on $X_\phi$ by \cite{pragacz:enumerative}.
  So $\p(\sL_\phi) \cong X_\phi$.
\end{proof}

\begin{eg}
  Let $\alpha_j=1$ for all $j$ and $\beta_i=0$ for all $i$.
  In this case $\phi$ corresponds to the choice of a $3$-tensor in
  $\phi \in \kk^a \ts \kk^b \ts \kk^{n+1}$.
  The variety $X=X_\phi$ is cut in $\p^n$ by the $b \times b$ minors of a matrix
  $M_\phi$ of linear forms, of size $a \times b$, while $Y=Y_\phi$ is
  a linear section of codimension $a$ in the Segre product
  $\p^{b-1}\times \p^n$.
  The map $\pi$ is the projection $\p^{b-1}\times \p^n$ onto the second factor $\p^n$,
  and its restriction to $Y$ is $q$.

  The variety $Y$ is also obtained as $\p(\sM_\phi)$, where
  $\sM_\phi=s_*(q^*(\OO_{\p^n}(1)))$, and we denote by $s$ the
  projection $Y \to \p^{b-1}$.
  The sheaf $\sM_\phi$ is presented by a matrix $N_\phi$, obtained by
  exchanging the roles of $\kk^b$ and $\kk^{n+1}$ in the expression of
  $\phi \in \kk^a \ts \kk^b \ts \kk^{n+1}$, so $N_\phi$ reads:
  \[
  \OO_{\p^{b-1}}(-1)^a \xr{N_\phi} \OO_{\p^{b-1}}^{n+1} \to \sM_\phi \to 0.
  \]

  For instance, if $a=4$, $b=3$, $n=6$, then, for general $\phi$, $X$ is a $4$-fold in
  $\p^6$ with $10$ singular points, defined by the order-$2$ minors of $M=M_\phi$.
  On the other hand, $Y$ is a smooth $4$-codimensional linear section
  of $\p^2 \times \p^6$, which is a $\p^2$-bundle over $\p^2$,
  obtained by projectivizing $\sM=\sM_\phi$ which is a rank-$3$ stable vector
  bundle on $\p^2$ with $c_1=4$.
  As such, $\sM$ splits over a general line $\ell$ of $\p^2$ as
  $\OO_\ell(1)^2 \oplus \OO_\ell(2)$. 
  This bundle is a rank-$3$ logarithmic bundle associated with $10$
  lines  $(\ell_1,\ldots,\ell_{10})$ of $\p^2$, 
  in the sense of \cite{valles:schwarzenberger-math-z}.
  The lines $\ell_i$  are jumping lines of $\sM$, in the sense that
  $\sM|_{\ell_i} \cong \OO_{\ell_i} \oplus \OO_{\ell_i}(2)^2$.
  Corresponding to the trivial summand of $\sM|_{\ell_i}$, there is a
  section the ruled $3$-fold $S_i=\p(\sM|_{\ell_i})$ which is contracted to a singular
  point of $X$ in $\p^6$.
  The sheaf $\sL_\phi$ is locally free of rank $1$ away from the $10$
  singular points of $X$ and has rank $2$ over these points.
  The corresponding divisor $D_i=q(S_i)$ are Weil divisor, which fail
  to be Cartier along the $10$ singular points.
\end{eg}

\section{Normal sheaf of a determinantal subvariety} \label{normale}

In order to study the tangent space at $[X_\phi]$ of the Hilbert
scheme $\sH$, we are now going to compute the normal sheaf of $X_\phi$, together with its
space of global sections, in the range
\eqref{numerical}, where $X_\phi = \rD_{b-1}(\phi)$ is given by 
a general morphism $\phi : \sA \to \sB$.

\begin{prop} \label{sezioni-normale}
  Let $X=X_\phi$ and let $\sN = \sN_{X,{\p^n}}$ be the normal sheaf of
  $X$ in ${\p^n}$. Assume \eqref{numerical}, $\dim(X) \ge 2$ and $c=\codim(X,\p^n) \ge 2$.
  Then we have:
  \[
  \hh^0(X,\sN) \le \lambda_c+K_3+\cdots+K_c.
  \]
\end{prop}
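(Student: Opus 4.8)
The plan is to compute $\HH^0(X,\sN)$ through the cokernel sheaf $\sC_\phi$ and the exact sequences of Section~\ref{coker}, and then to match the answer with the formula for $\dim(\cY)$ in Lemma~\ref{dimY}. Throughout I take $\phi$ general, so that $X=X_\phi$ is normal and Cohen--Macaulay with $\sing(X)=\rD_{b-2}(\phi)$.

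First I would localise on the smooth locus. Since $\sN=\HHom_{\OO_{X}}(\cI_{X}/\cI_{X}^{2},\OO_{X})$ is reflexive on the normal variety $X$, its sections are detected on any open set whose complement has codimension $\ge 2$. I take $X^{\circ}=X\setminus\rD_{b-2}(\phi)$: as $\rD_{b-2}(\phi)$ has the expected codimension $2(c+1)$ in $\p^{n}$, it has codimension $c+2\ge 4$ in $X$ (this is the first place where $c\ge 2$ and $\dim(X)\ge 2$ are used), so $\HH^{0}(X,\sN)=\HH^{0}(X^{\circ},\sN)$. On $X^{\circ}$ the sheaf $\sL_\phi$ is invertible and $X^{\circ}$ is smooth, hence a local complete intersection in $\p^{n}$; for such a locus the standard identification gives $\sN_{|X^{\circ}}\cong\EExt^{1}_{\p^{n}}(\sC_\phi,\sC_\phi)_{|X^{\circ}}$. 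By \eqref{contained} this reduces the whole problem to estimating global sections of $\EExt^{1}_{\p^{n}}(\sC_\phi,\sC_\phi)$, a subsheaf of $\sF_\phi$.

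Next I would extract the bound from \eqref{pseudo}. Splitting it as $0\to\OO_{X}\to\sB^{*}\ts\sC_\phi\to\im(\psi)\to 0$ and $0\to\im(\psi)\to\sA^{*}\ts\sC_\phi\to\sF_\phi\to 0$ and taking cohomology yields
\[
\hh^{0}(\sF_\phi)\le \hh^{0}(\sA^{*}\ts\sC_\phi)-\hh^{0}(\sB^{*}\ts\sC_\phi)+\hh^{0}(\OO_{X})+\hh^{1}(\im(\psi)),
\]
with $\hh^{0}(\OO_{X})=1$. To evaluate the two middle terms I would resolve $\sC_\phi$ by the Buchsbaum--Rim complex attached to $\phi$, extending \eqref{lunga} to a locally free resolution whose higher terms $\GG_{i}$ are explicit sums of bundles $\wedge^{\bullet}\sA\ts S^{\bullet}\sB^{*}$. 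Applying $\HHom(-,\sC_\phi)$ and using \eqref{lunga} turns every contribution into cohomology of line bundles $\OO_{\p^{n}}(m)$, i.e. into binomials $\binom{m+n}{n}$. The first two terms of the resolution reproduce $\hh^{0}(\sA^{*}\ts\sB)+\hh^{0}(\sB^{*}\ts\sA)-\hh^{0}(\sA^{*}\ts\sA)-\hh^{0}(\sB^{*}\ts\sB)+1=\lambda_{c}$, while the terms coming from $\HHom(\GG_{i},\sC_\phi)$ produce precisely the corrections $K_{3},\ldots,K_{c}$ of \eqref{Ki}, the signs $(-1)^{i-r}$ being those of the differentials of the resolution. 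This identifies the right-hand side of the displayed bound with $\lambda_{c}+K_{3}+\cdots+K_{c}=\dim(\cY)$.

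The hard part is the vanishing needed to turn these Euler-characteristic computations into genuine dimension bounds. Concretely I must show that $\hh^{1}(\im(\psi))=0$, that the higher sheaves $\EExt^{j}_{\p^{n}}(\sC_\phi,\sC_\phi)$ with $j\ge 2$ and the groups $\HH^{i}(\OO_{X})$, $i\ge 1$, do not contribute, and that the twists $\sA^{*}\ts\sC_\phi$, $\sB^{*}\ts\sC_\phi$ together with the resolution terms are acyclic in the relevant degrees, so that each $\binom{m+n}{n}$ really equals an $\hh^{0}$. This is exactly where the numerical hypothesis \eqref{numerical}, $\alpha_{i}\ge\beta_{i+1}$, is indispensable: it forces every line bundle $\OO_{\p^{n}}(m)$ produced by the resolution into an acyclic range, so the alternating sums of binomials are honest dimensions. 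Granting this vanishing, and checking that $\EExt^{1}_{\p^{n}}(\sC_\phi,\sC_\phi)$ has enough depth that passing to $X^{\circ}$ loses no sections, the chain
\[
\hh^{0}(X,\sN)=\hh^{0}(X^{\circ},\EExt^{1}_{\p^{n}}(\sC_\phi,\sC_\phi))\le\hh^{0}(X,\sF_\phi)\le\dim(\cY)
\]
closes and gives the stated inequality.
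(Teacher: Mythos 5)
Your overall strategy coincides with the paper's: identify $\hh^0(X,\sN)$ with sections of $\EExt^1_{\p^n}(\sC_\phi,\sC_\phi)$, bound these by $\hh^0(\sF_\phi)$ using \eqref{pseudo} and \eqref{contained}, and evaluate $\hh^0(\sF_\phi)$ via the Buchsbaum--Rim resolution to recover $\lambda_c+K_3+\cdots+K_c$. However, there are two genuine gaps at exactly the two technical pivots. First, your chain $\hh^0(X,\sN)=\hh^0(X^{\circ},\EExt^1(\sC_\phi,\sC_\phi))\le\hh^0(X,\sF_\phi)$ does not close: the containment \eqref{contained} gives $\HH^0(X^{\circ},\EExt^1)\subset\HH^0(X^{\circ},\sF_\phi)$, but sections of $\sF_\phi$ (or of $\EExt^1$) over $X^{\circ}$ need not extend to $X$, so you cannot compare with $\hh^0(X,\sF_\phi)$ without a depth-$2$ (i.e.\ $S_2$) statement for $\EExt^1(\sC_\phi,\sC_\phi)$ along $\rD_{b-2}(\phi)$ --- which you flag but never prove, and which is not obviously easier than the problem itself. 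The paper sidesteps this by proving the \emph{global} isomorphism $i_*(\sN)\cong\EExt^1_{\p^n}(\sC_\phi,\sC_\phi)$ via the local-to-global spectral sequence, reduced to the vanishing $\EExt^1_X(\sL_\phi,\sL_\phi)=0$, which is in turn proved on the resolution $Y_\phi\to X_\phi$ using the codimension bound of Lemma \ref{sing}. That bound comes from Chang's filtered Bertini theorem; your parallel assertion that $\rD_{b-2}(\phi)$ attains its expected codimension $2(c+1)$ for general $\phi$ likewise requires such a Bertini-type input and is not automatic under \eqref{numerical}.

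Second, the vanishing $\HH^1(X,\im(\psi))=0$ (Claim \ref{annullo} in the paper) is the heart of the estimate, since it enters your inequality with a positive sign, and your proposed justification --- that \eqref{numerical} forces the line bundles in the resolution into an ``acyclic range'' --- does not suffice. For $\dim(X)\ge 3$ the vanishing is indeed formal (from $\HH^1(X,\sL(t))=0$ and $\HH^2(X,\OO_X)=0$), but in the critical case $\dim(X)=2$ the group $\HH^2(X,\OO_X)$ need not vanish, and no twist bookkeeping on $\p^n$ resolves this. The paper has to move to the projective bundle $\cP=\p(\sB)$, recognize $\im(\psi)$ as $q_*$ of the restricted relative Euler sequence, and kill $\HH^1(Y_\phi,(\cT_\sB)_{|Y_\phi})$ by running the Koszul complex of $s_\phi$ against the cohomology of $\cT_\sB((1-k)P+tH_\cP)$, using the relation $a=n+b-3$. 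Without this step (and without the depth issue above being resolved) the stated inequality is not established.
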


It will turn out that the above inequality is an equality (cf.
proof of the main result, Section \ref{fibres}).
The proof of the above proposition will follow from the next lemmas
and Proposition \ref{normalsheaf}.

\begin{lem} \label{sing}
  In the range \eqref{numerical}, for $\phi$ general in $\bW$, we
  have:
  \[\codim(\sing(X_\phi),X_\phi)\ge 3.\]
\end{lem}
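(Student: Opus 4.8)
The plan is to reduce the statement to a codimension estimate for the deeper degeneracy locus $\rD_{b-2}(\phi)$, and then to prove that estimate by a dimension count on an incidence variety, the only genuine work being a combinatorial bound that uses \eqref{numerical}.

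First I would invoke the classical fact that a determinantal scheme $X_\phi = \rD_{b-1}(\phi)$ of expected codimension $c = a-b+1$ is smooth away from the next degeneracy locus, i.e. $\sing(X_\phi) \subseteq \rD_{b-2}(\phi)$. Indeed, at a point $x$ where $\phi_x$ has rank exactly $b-1$ one reduces $\phi$ locally to a block form with an invertible $(b-1)\times(b-1)$ block, and the $c$ remaining maximal minors then have independent differentials (see \cite{bruns-vetter:LNM}, \cite{miro-roig:determinantal}). Since \eqref{numerical} forces \eqref{standard} (from $\alpha_i \ge \beta_{i+1} \ge \beta_i$), a general $\phi$ has $X_\phi$ of codimension exactly $c$, so this inclusion applies. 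As $\rD_{b-2}(\phi) \subseteq X_\phi$ we have $\codim(\rD_{b-2}(\phi), X_\phi) = \codim(\rD_{b-2}(\phi), \p^n) - c$, and it therefore suffices to show that for general $\phi$ the locus $\rD_{b-2}(\phi)$ has codimension at least $c+3$ in $\p^n$.

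To bound this I would form the incidence variety $Z = \{(\phi,x) \in \bW \times \p^n : \rk(\phi_x) \le b-2\}$ and project to both factors. The key observation is that, for every $x$, the evaluation map $\bW \to \Hom(\sA_x,\sB_x)$ is surjective onto the fixed subspace $I$ of matrices $(m_{ij})$ with $m_{ij}=0$ whenever $\alpha_j < \beta_i$, because each admissible entry is a section of some globally generated $\OO_{\p^n}(\alpha_j-\beta_i)$ with $\alpha_j-\beta_i \ge 0$. Hence the fibre of $Z$ over any $x$ has codimension in $\bW$ equal to $\codim(D,I)$, where $D \subseteq I$ is the rank $\le b-2$ locus of the structured matrix space $I$; projecting to $\bW$ and using upper semicontinuity of fibre dimension gives $\codim(\rD_{b-2}(\phi),\p^n) \ge \codim(D,I)$ for general $\phi$. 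Everything is thus reduced to the linear-algebraic estimate $\codim(D,I) \ge c+3$, and I in fact expect the full expected value $\codim(D,I) = 2c+2$ to hold.

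The main obstacle is precisely this estimate, and it is here that \eqref{numerical} enters. Its role is structural: the inequalities $\alpha_i \ge \beta_{i+1}$ say exactly that the entry $(i+1,i)$ is admissible, so the support of $I$ contains every position on or above the subdiagonal, and column $j$ has admissible rows $\{i : \beta_i \le \alpha_j\} \supseteq \{1,\ldots,\min(j+1,b)\}$ (so $D$ is a ladder-type determinantal locus with a large ladder). To bound $\dim D$ I would resolve it by the Grassmannian of left kernels, parametrizing rank $\le b-2$ matrices by pairs $(M,W)$ with $W \in \mathrm{Gr}(2,b)$ and $w\tra M = 0$ for all $w \in W$, and count the conditions $w\tra M = 0$ column by column: for column $j$ the number of independent conditions is the dimension of the projection of $W$ onto the admissible rows of that column. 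The lower bound $r_j \ge \min(j+1,b)$ forced by \eqref{numerical} guarantees that a generic $W$ imposes two full conditions on enough columns to realize the expected codimension, while on the Schubert strata where $W$ projects degenerately the loss of conditions is compensated by the codimension of that stratum in $\mathrm{Gr}(2,b)$. Carrying this stratified count through — checking that no stratum produces a component of $D$ of more than the expected dimension — is the delicate part; dropping $\alpha_i \ge \beta_{i+1}$ would make some $r_j$ too small and let the rank drop ``for free'', which is exactly what would break the estimate. Assembling the pieces yields $\codim(\sing X_\phi, X_\phi) \ge \codim(\rD_{b-2}(\phi),\p^n) - c \ge (c+3)-c = 3$ for general $\phi$, as claimed.
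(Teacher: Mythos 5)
Your route --- an incidence variety over $\p^n$ together with a codimension estimate for the rank $\le b-2$ locus $D$ of the ``ladder'' matrix space $I$ --- is genuinely different from the paper's. The paper disposes of the lemma in two lines by invoking Chang's filtered Bertini-type theorem \cite{chang:bertini}: one builds filtrations $\sA\supset\sA_1\supset\cdots$ and $\sB\supset\sB_1\supset\cdots$ adapted to the degrees, and Chang's main theorem then yields both $\codim(X_\phi,\p^n)=a-b+1$ and $\codim(\sing(X_\phi),X_\phi)\ge 3$ for $\phi$ general in the constrained linear system $\bW$. That theorem is engineered precisely for the difficulty your argument must confront by hand, namely that some entries of $M_\phi$ are forced to vanish, so plain Bertini--Kleiman transversality in the space of all matrices is unavailable.

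As written, your proposal has two genuine gaps. First, the opening ``classical fact'' is false: expected codimension does \emph{not} imply that $X_\phi$ is smooth where $\rk(\phi_x)=b-1$. After the local reduction to a $1\times c$ row $(\psi_1,\dots,\psi_c)$, expected codimension only says the zero locus has dimension $n-c$; it does not force the differentials $d\psi_k$ to be independent at every point. (Take $b=1$, $a=2$: then $X_\phi=V(f_1,f_2)$ can have codimension $2$ and still be singular, while $\rD_{b-2}(\phi)=\rD_{-1}(\phi)=\emptyset$.) The inclusion $\sing(X_\phi)\subseteq\rD_{b-2}(\phi)$ is itself a transversality assertion about general $\phi$, of exactly the same nature as the estimate you are reducing to, and must be folded into the incidence-variety argument; surjectivity of the evaluation map $\bW\to I$ plus generic smoothness gives it in characteristic zero, but the paper works over an arbitrary algebraically closed field, where that argument does not apply verbatim. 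Second, the heart of the matter --- the bound $\codim(D,I)\ge c+3$ (or the expected value $2c+2$) for the ladder determinantal locus, using only the staircase shape guaranteed by \eqref{numerical} --- is left as ``the delicate part'' and not carried out; this stratified Schubert count is essentially the entire content of the lemma, and without it the proof is a plan rather than a proof. Both issues are exactly what the citation of \cite{chang:bertini} is there to absorb.
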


\begin{proof}
  This follows from \cite[Remark 2.7]{kleppe-miro-roig:dimension}, or
  equivalently from 
  \cite{chang:bertini}. Indeed, we 
  construct Chang's filtration of the bundles 
  $\sB \supset \sB_1 \supset \cdots \supset \sB_l$ and $\sA\supset
  \sA_1 \supset \cdots \supset \sA_l$, as follows.
  We choose
  $\sA_1 = \oplus_{j=1}^{r_1} \OO_{\p^n}(-\alpha_j)$ with
  $r_1 = \max \{j | \alpha_{a+1-j} \ge \beta_b\}$ and 
  $\sB_1 = \oplus_{i=1}^{s_1} \OO_{\p^n}(-\beta_i)$ with $s_1 =
  \max\{i|\beta_{b-i+1} > \alpha_{a-r_1}\}$.
  Iterating this procedure we get the desired filtration, and, since
  we are assuming $a - b +1 \ge 2$, we 
  obtain by the main theorem of \cite{chang:bertini} that 
  $\codim(X_\phi)=a-b+1$ and 
  $\codim(\sing(X_\phi),X_\phi)\ge 3$.
\end{proof}

\begin{lem}
  Set $\sC = \sC_\phi$. Then, assuming \eqref{standard}, we have:
  \[
  \hh^0({\p^n},\EExt^1_{{\p^n}}(\sC,\sC)) \leq \lambda_c+K_3+\cdots+K_c.
  \]
\end{lem}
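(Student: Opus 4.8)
The plan is to use the inclusion \eqref{contained} to reduce the statement to a bound on $\hh^0({\p^n},\sF_\phi)$, and then to read off the latter from the defining sequence \eqref{pseudo} together with the cohomology of the twists of $\sC$. First I would note that, by \eqref{contained}, $\EExt^1_{{\p^n}}(\sC,\sC)$ is a subsheaf of $\sF_\phi$, so that $\hh^0({\p^n},\EExt^1_{{\p^n}}(\sC,\sC)) \le \hh^0({\p^n},\sF_\phi)$, and it suffices to bound the right-hand side.

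To control $\hh^0({\p^n},\sF_\phi)$ I would break \eqref{pseudo} at the image $\sM=\im(\psi)$ into the two short exact sequences
\begin{equation*}
0 \to \OO_X \to \sB^*\ts\sC \to \sM \to 0, \qquad 0 \to \sM \to \sA^*\ts\sC \to \sF_\phi \to 0.
\end{equation*}
Since $\sL_\phi$ is ACM on $X$ and $X$ itself is arithmetically Cohen--Macaulay, the intermediate cohomology vanishes as soon as $\dim(X)\ge 2$; in particular $\HH^1(\OO_X)=0$ and $\HH^1({\p^n},\sA^*\ts\sC)=\HH^1({\p^n},\sB^*\ts\sC)=0$, these being direct sums of the groups $\HH^1(X,\sL_\phi(\alpha_j))$ and $\HH^1(X,\sL_\phi(\beta_i))$. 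Chasing the two sequences then gives, using $\hh^0(\OO_X)=1$ by connectedness of $X$,
\begin{equation*}
\hh^0({\p^n},\sF_\phi) = \hh^0({\p^n},\sA^*\ts\sC) - \hh^0({\p^n},\sB^*\ts\sC) + 1 + \hh^1({\p^n},\sM),
\end{equation*}
while the first sequence forces $\hh^1({\p^n},\sM)\le \hh^2(\OO_X)$. Hence $\hh^0({\p^n},\sF_\phi)\le \hh^0({\p^n},\sA^*\ts\sC) - \hh^0({\p^n},\sB^*\ts\sC) + 1 + \hh^2(\OO_X)$.

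It then remains to evaluate the three quantities on the right, namely $\sum_j \hh^0(X,\sL_\phi(\alpha_j))$, $\sum_i \hh^0(X,\sL_\phi(\beta_i))$ and $\hh^2(\OO_X)$. For this I would resolve $\sC$ by the Buchsbaum--Rim complex of $\phi$,
\begin{equation*}
0 \to E_c \to \cdots \to E_2 \to \sA \xr{\phi} \sB \to \sC \to 0, \qquad E_k = \wedge^{b+k-1}\sA \ts S^{k-2}\sB^* \ts \wedge^b\sB^* \quad (2\le k\le c),
\end{equation*}
setting $E_0=\sB$, $E_1=\sA$, and resolve $\OO_X$ by the corresponding Eagon--Northcott complex (see \cite{bruns-vetter:LNM}). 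Twisting by $\OO_{{\p^n}}(\alpha_j)$ and $\OO_{{\p^n}}(\beta_i)$ and taking cohomology reduces every term to binomial coefficients $\binom{m+n}{n}=\chi(\OO_{{\p^n}}(m))$, the numerical hypotheses \eqref{standard} being exactly what is needed to locate the twists in ranges where the intermediate cohomology of the twisted resolution terms vanishes, so that $\hh^0$ can be computed from $\chi$ term by term. The contribution of $E_0$ and $E_1$, combined with the terms coming from $\sB^*\ts\sC$ and the summand $1$, reassembles into $\lambda_c$ of \eqref{lambdac}, which is exactly $\chi(\sA^*\ts\sB)+\chi(\sB^*\ts\sA)-\chi(\sA^*\ts\sA)-\chi(\sB^*\ts\sB)+1$; meanwhile the higher terms $E_3,\ldots,E_c$, whose summands are indexed by the subsets $\{i_1<\cdots<i_r\}$ of $\alpha$-indices arising from $\wedge^{b+k-1}\sA$ and the multisets $\{j_1\le\cdots\le j_s\}$ of $\beta$-indices arising from $S^{k-2}\sB^*$, reproduce the alternating binomial sums $K_3,\ldots,K_c$ of \eqref{Ki}.

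The main obstacle is precisely this last identification. One must match the index ranges and the shifts $h_i$ of \eqref{elli} to the exterior and symmetric powers appearing in the Buchsbaum--Rim terms, and, more delicately, justify the passage from $\chi$ to $\hh^0$: for $\dim(X)=2$ the groups $\HH^2(X,\sL_\phi(\cdot))$ and $\hh^2(\OO_X)$ need not vanish, and one has to check that they contribute only the correction already bounded above, so that the total stays $\le \lambda_c+K_3+\cdots+K_c$. This bookkeeping is carried out in \cite{kleppe-miro-roig:dimension, kleppe-miro-roig.families-arxiv}, whose computation of the dimension of the determinantal family we are effectively re-deriving at the level of the cokernel sheaf $\sC$.
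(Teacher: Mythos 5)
Your overall strategy coincides with the paper's: reduce via \eqref{contained} to bounding $\hh^0({\p^n},\sF_\phi)$, split \eqref{pseudo} at $\sM=\im(\psi)$, and evaluate the resulting Euler-characteristic-type expression through the Buchsbaum--Rim resolution of $\sC$. The vanishings $\HH^1(X,\OO_X)=0$ and $\HH^1(X,\sL_\phi(t))=0$ from the ACM property, and the resulting identity $\hh^0(\sF_\phi)=\hh^0(\sA^*\ts\sC)-\hh^0(\sB^*\ts\sC)+1+\hh^1({\p^n},\sM)$, are all correct and match the paper's computation.

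However, there is a genuine gap exactly where you flag the ``main obstacle'': your chain of inequalities ends with $\hh^0(\sF_\phi)\le \lambda_c+K_3+\cdots+K_c+\hh^1({\p^n},\sM)$, with $\hh^1(\sM)$ only bounded by $\hh^2(X,\OO_X)$. The target $\lambda_c+K_3+\cdots+K_c$ is precisely the value of $\sum_j f(\alpha_j)-\sum_i f(\beta_i)+1$ where $f(t)=\hh^0(X,\sL_\phi(t))$, so the desired inequality requires $\hh^1({\p^n},\sM)=0$ on the nose, not merely $\hh^1(\sM)\le\hh^2(\OO_X)$. When $\dim(X)=2$ the group $\HH^2(X,\OO_X)$ is in general nonzero (e.g.\ determinantal surfaces of general type have $p_g>0$), so the correction term cannot be discarded and is not ``already bounded above'' by anything useful; deferring this to the bookkeeping of the Buchsbaum--Rim terms does not resolve it, since those terms compute $f(t)$ and have nothing left over to absorb $\hh^1(\sM)$. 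The paper closes this gap with a separate geometric argument (Claim \ref{annullo}): the sequence defining $\im(\psi)$ is the direct image under $q$ of the restriction to $Y_\phi\subset\p(\sB)$ of the relative Euler sequence \eqref{tangente-relativo}, so the required vanishing becomes $\HH^1(Y_\phi,(\cT_\sB)_{|Y_\phi})=0$, which is then proved by tensoring the Koszul complex of $s_\phi$ with $\cT_\sB(P)$ and checking the vanishing of the relevant cohomology on $\cP$ via the projection $\pi$. This use of the complete-intersection model $Y_\phi$ is the idea missing from your proposal; without it, or some substitute proof that $\HH^1({\p^n},\im(\psi))=0$ in the surface case, the stated inequality does not follow.
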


\begin{proof}
  Keep in mind that $X=X_\phi$ is an integral ACM subvariety of
  ${\p^n}$, i.e., $R_X$ is a Cohen-Macaulay graded ring. Call $i:X \mono \p^n$ the embedding.
  Since $\dim(X)\ge 2$, the fact that $X$ is ACM implies
  $\HH^1(X,\OO_X(t))=0$ for all $t\in \Z$, and also
   $\HH^0(X,\OO_X) \cong \kk$.
  Further, recall $\sC \cong i_*(\sL)$, where $\sL=\sL_\phi$ is an ACM rank-$1$
  sheaf on $X$. Hence $\HH^1(X,\sL(t))=0$ for all $t\in \Z$.
  Set $\sK = \ker(\phi)$. We have an exact sequence:
  \begin{equation}
    \label{lunga}
    0 \to \sK \to \sA \to \sB \to \sC \to 0.
  \end{equation}

% FIX qui ci vuole l'ipotesi good det.
By \cite[Lemma 3.2]{kleppe-miro-roig:dimension}, we have:
\[
\HHom_{{\p^n}}(\sC,\sC) \cong i_*(\OO_{X}).
\]
Therefore, applying $\HHom_{{\p^n}}(-,\sC)$ to \eqref{lunga}, we
obtain the long exact sequence:
\[  0 \to i_*(\OO_{X}) \to \sB^* \ts \sC\xr{\psi} \sA^* \ts
  \sC \to \sF \to 0,\]
where the sheaf $\sF$, defined by the sequence above, is
supported on $X$. We have:
\[
\EExt^1_{{\p^n}}(\sC,\sC)  \subset \sF.
\]
  Then we have:
  \[
  \hh^0({\p^n},\EExt^1_{{\p^n}}(\sC,\sC)) \leq \hh^0(\p^n,\sF),
  \]
  and we  want to show:
  \[
  \hh^0(\p^n,\sF) = \lambda_c+K_3+\cdots+K_c.
  \]
  Let us assume for the moment that the following claim holds:
  \begin{claim} \label{annullo}
    Whenever $\dim(X) \ge 2$, we have $\HH^1(X,\im(\psi))=0$.
  \end{claim}
  Set $f(t)=\hh^0(X,\sL(t)) = \hh^0({\p^n},\sC(t))$.
  Assuming the above claim, we can write:
  \begin{equation}
    \label{base-conto}
    \hh^0(\p^n,\sF)=\sum_{j=1,\ldots,a} f(\alpha_j)-\sum_{i=1,\ldots,b} f(\beta_i)+1.    
  \end{equation}

The Buchsbaum-Rim complex associated with $\HH^0_*(\phi)$ gives the graded free
resolution of $C_\phi$:
\begin{align}
  \label{BR}
  0 \to &\wedge^{a} A \otimes S^{c-2} B^{*}(\beta) \to \cdots
  \to \wedge^{b+s+1} A \otimes S^{s} B^{*}(\beta) \to \cdots  \\
  \nonumber \cdots & \to  \wedge^{b+1} A(\beta) \to A \to B \to C_\phi \to 0,
\end{align}
where we set $\beta = \sum_{i=1,\ldots,b} \beta_i$. Put $\sE_s = \wedge^{b+s+1} \sA\otimes S^{s}(\sB^{*}) (\beta)$.
Sheafifying \eqref{BR} and computing global sections we get:
  \begin{align} \label{ft}
     f(t) = & \sum_{i=1,\ldots,b}{{n-\beta_{i}+t}\choose
      n}-\sum_{r=1,\ldots,a}{{n-\alpha_r+t} \choose n}+ \\
    \nonumber + & \sum_{s=0}^{c-2}(-1)^s \hh^0(\p^n,\sE_s(t)).
  \end{align}
  Further, it is easy to compute $\hh^0(\p^n,\sE_s(t))$ as:
  \begin{equation} \label{Et}
     \sum_{\substack{1\le i_1<\cdots <i_{a-b-s-1}\le a\\
        1\le j_1\le\ldots \le j_s\le
        b}}{n-\ell+\alpha_{i_1}+\cdots+\alpha_{i_{a-b-s-1}}+\beta_{j_1}+\cdots
      +\beta_{j_s}+t \choose n}.
  \end{equation}
  Note that, in view of \eqref{standard}, the upper term appearing in the
  binomial coefficient above is strictly bounded above by $-\alpha_{b+1}+t+n$, hence 
  the binomial coefficient vanishes for $t=\beta_i$, and $t=\alpha_{i-1}$ with $i\le {b+1}$.
  Then, combining \eqref{Et} and \eqref{ft}, we get an expression for
  $f(t)$, hence for $\hh^0(\p^n,\sF)$ in view of
  \eqref{base-conto}.
  Recalling the definition of $\lambda_c$ from
  \eqref{lambdac} and of the
  $K_i$'s from \eqref{Ki},  one now easily gets the desired expression
  for $\hh^0(\p^n,\sF)$.
\end{proof}

\begin{proof}[Proof of Claim \ref{annullo}]
  If $\dim(X)\ge 3$ the vanishing is clear since $\HH^1(X,\sL(t))=0$
  for all $t\in \Z$ and $\HH^2(X,\OO_X)=0$. So we only have to prove
  the vanishing for $\dim(X) = n+b-a-1 = 2$. 
  Set $Y=Y_\phi$ and recall that $X=q(Y)$.
  We get the sequence defining
  $\im(\psi)$ applying $q_*$ to:
  \begin{equation}
    \label{reltan}
  0 \to \OO_Y \to \pi^* \sB^* \ts \OO_Y(P_Y) \to (\sT_\sB)|_{Y} \to 0,    
  \end{equation}
  where $\sT_\sB$ is the relative tangent bundle of $\pi$.
  So we want:
  \[    \HH^1(Y,(\sT_\sB)|_{Y})=0.   \]

  To obtain this vanishing, we look at the Koszul complex of $s_\phi$
  and we tensor it by $\sT_\sB$.
  The $k$-th term of this complex is $\pi^*(\wedge^k \sA)\ts \sT_\sB(-kP) $.
  Then it suffices to show:
  \begin{equation}
    \label{annullo3}
    \HH^k(\sP,\sT_\sB((1-k)P + tH_\sP)) = 0, \,\, \mbox{for
      $k=1,\ldots,a+1$ and $\forall t \in \Z$.}
  \end{equation}
  
  The sequence \eqref{reltan} is the restriction of the relative Euler  sequence   twisted by $\OO_\sP(-P)$:
\begin{equation}
  \label{tangente-relativo}
  0 \to  \OO_\sP(-P)  \to \pi^*(\sB^*) \to \sT_\sB(-P) \to 0.
\end{equation}

Recall that we set $\beta = \sum_{i=1,\ldots,b} \beta_i$. We will use the natural isomorphisms (see for instance \cite[Exercise 8.4, pg 253]{hartshorne:ag}):
\begin{equation}
  \label{insomma}
    \RR^j \pi_*(\OO_\sP(1-\ell) P) \cong \left\{
    \begin{array}{ll}
        S^{1-\ell} \sB, &   \mbox{for $j=0$ and $\ell \le 1$,}\\ 
        (S^{\ell-1-b}\sB)^{*}(\beta), & \mbox{for $j=b-1$, $\ell \ge b+1$,}\\
        0 & \mbox{for $j \ne 0,b-1$}. 
    \end{array}
\right.
\end{equation}

Applying direct image functors $\RR^j \pi_*$ to
\eqref{tangente-relativo} we easily obtain the vanishing of $\RR^j
\pi_*(\sT_\sB((1-\ell) P))$ for $1\le j \le b-3$ (and obviously for $j
\ge b$).

Now we need to compute this sheaf for $j=b-1$ and $j=b-2$. 
For $j=b-2$, we show that it vanishes except for $\ell=b+1$, and for
$j=b-1$ we show that it is a direct sum of line bundles and non-zero
only for $\ell \ge b-3$. In fact $\RR^{j}
\pi_*(\sT_\sB((1-\ell) P))$ are the kernel and cokernel, respectively
for $j=b-2$ and $j=b-1$, of the induced morphism:
\begin{equation}
  \label{higherdirectImage}
 \RR^{b-1} \pi_{*}(\OO_\sP((1-\ell)P) \to \sB^* \otimes \RR^{b-1} \pi_{*}(\OO_\sP((2-\ell)P)).
\end{equation}

We deduce  $\RR^{b-2} \pi_{*}(\sT_\sB((1-\ell)P)=0$ for
$\ell\le b$. For higher $\ell$, using \eqref{insomma}, the morphism \eqref{higherdirectImage} takes
the form:
\begin{equation}
  \label{higherdirectImage'}
  (S^{\ell-1-b}\sB)^{*}(\beta) \to \sB^* \otimes (S^{\ell-2-b}\sB)^{*}(\beta).
\end{equation}
For $\ell = b+2$, this map is an isomorphism, and for all $\ell \ge
b+3$ it is injective, since it  is just the twisted dual of the obvious multiplication map:
\[
\sB \otimes S^{\ell-2-b}\sB \to S^{\ell-1-b}\sB.
\]
Therefore, from injectivity of \eqref{higherdirectImage'} we deduce
$\RR^{b-2} \pi_{*}(\sT_\sB((1-\ell)P)=0$ for $\ell\ge b+2$.
We have proved that $\RR^{b-2} \pi_{*}(\sT_\sB((1-\ell)P))=0$ except
for $\ell=b+1$. Moreover in this case we get $\RR^{b-2}
\pi_{*}(\sT_\sB(-bP)) \cong \OO_{\p^n}(\beta)$.
Also, $\RR^{b-1} \pi_*(\sT_\sB((1-\ell)P)) = 0$ for $\ell \le b+2$ while for $\ell \ge b+3$ we get that $\RR^{b-1} \pi_*(\sT_\sB((1-\ell)
P))$ is the bundle $(S^{\ell-2-b,1} \sB)^*(\beta)$ obtained by
plethysm (cf. \cite{weyman:tract}),
which is also a direct sum of line bundles.

We can now conclude by the Leray spectral sequence:
\begin{equation}
  \label{leray}
\HH^i(\p^n,\RR^j \pi_*(\sT_\sB((1-i-j) P))) \Rightarrow \HH^{i+j}(\sP,\sT_\sB((1-i-j) P)).
\end{equation}
Indeed, by the previous analysis, the only terms contributing to 
$\HH^{i+j}(\sP,\sT_\sB((1-i-j)P + tH_\sP))$ appear for $j=b-2$ or $j=b-1$.
Recall also that we only have to treat the case $a = n+b-3$ and
$a-b+1\ge 2$, i.e., $n \ge 4$.

Looking at the case $j=b-1$, we have said that $\RR^{j}
\pi_*(\sT_\sB(1-i-j) P))$ is a direct sum of line bundles for $i+j\ge
b+3$, or zero for $i+j\le b+2$. So we may assume $i \ge b+3-j=4$.
Also, $i+j \le a+1$ implies $i \le a-b+2=n-1$, so 
$\HH^i(\p^n,\RR^{j}\pi_*(\sT_\sB(1-i-j) P))=0$ in this range.
Then, \eqref{annullo3} follows from \eqref{leray}.

In case $j=b-2$, in order for $\RR^{j} \pi_*(\sT_\sB(1-i-j) P))$ to be non-zero
(and hence isomorphic to $\OO_{\p^n}(\beta)$) we
must have $i+j=b+1$, which implies $i=3$. Hence $n\ge 4$ gives $i\le
n-1$ and 
therefore $\HH^i(\p^n,\OO_{\p^n}(\beta))=0$.
So
\eqref{annullo3} holds again by \eqref{leray}.
\end{proof}

\begin{prop} \label{normalsheaf}
  Assuming \eqref{numerical}, we have:
  \[
  i_*(\sN) \cong \EExt^1_{{\p^n}}(\sC,\sC).
  \]
\end{prop}

\begin{proof}
  We recall the identification $\sC \cong i_*(\sL)$, and the natural isomorphism:
  \[
  i_*(\sN) \cong \EExt^1_{{\p^n}}(i_*(\OO_X),i_*(\OO_X)).
  \]
  We have to provide an isomorphism of the right-hand-side with
  $\EExt^1_{{\p^n}}(\sC,\sC)$.
  In order to obtain it, we consider the cohomological spectral sequence:
  \begin{equation}
    \label{spettrale}
  E_2^{p,q} = \EExt^p_{{\p^n}}(i_*(\OO_X),i_*(\EExt^q_{X}(\sL,\sL))) \Rightarrow \EExt^{p+q}_{{\p^n}}(\sC,\sC).
  \end{equation}
  Let us postpone to the end of the proof the explanation for this
  formula, and assume it for now.

  Since we have seen that $\HHom_{X}(\sL,\sL) \cong \OO_X$,
  \eqref{spettrale} this gives the required isomorphism once we prove $\EExt^1_{X}(\sL,\sL)=0$.
  In order to show this vanishing, we recall that $\sL \cong
  q_*(\OO_Y(P_Y))$, where we have set $Y=Y_\phi$.
  Projection formula provides a natural isomorphism:
  \[
  \EExt^1_{X}(\sL,\sL) \cong  \EExt^1_{Y}(q^*(\sL),\OO_Y(P_Y)).
  \]
  To show that the right-hand-side vanishes, we write the
  relative dual Euler sequence:
  \begin{equation}
    \label{tauto}
    0 \to \Omega_\phi(P_{Y_\phi}) \to q^*(\sL_\phi) \to \OO_{Y_\phi}(P_{Y_\phi}) \to 0,
  \end{equation}
  where the relative cotangent sheaf $\Omega_\phi$ is defined as the kernel
  of the canonical surjection above,
  and is supported on the locus in $Y_\phi$  blown down
  by $q$.
  We apply
  $\HHom_{Y}(-,\OO_Y(P_Y))$ to the exact sequence \eqref{tauto}.
  Clearly $\EExt^1_{Y}(\OO_Y(P_Y),\OO_Y(P_Y))=0$ for $\OO_Y(P_Y)$ is
  locally free.
  Further, $q$ is a birational surjective morphism, and the support 
  $Z_\phi$ of $\Omega_\phi$ lies over $\rD_{b-2}(\phi) \subset \sing(X_\phi)$, where
  the fibres of $q$ are generically contained in a $\p^1$. Then we have
  $\codim(Z_\phi,Y_\phi) \ge \codim(\sing(X_\phi),X_\phi)-1 \ge 2$ by
  Lemma \ref{sing}.
  Then we get $\EExt^1_{Y}(\Omega_\phi(P_{Y_\phi}),\OO_Y(P_Y))=0$, and so
  $\EExt^1_{Y}(q^*(\sL),\OO_Y(P_Y))=0$ and we are done.

  Finally, let us prove \eqref{spettrale}.
  Let $\sE$ be a coherent sheaf on $X$.
  We consider the functors $\Psi=\HHom_{\p^n}(i_*(\OO_X),i_*(-)):
  \Coh(X) \to \Coh(\p^n)$
  and $\Phi=\HHom_{X}(\sE,-) : \Coh(X) \to \Coh(X)$.
  The composition $\Psi \circ \Phi$, applied to a coherent sheaf $\sG$
  on $X$, is:
  \begin{equation}
    \label{iso}
  \HHom_{\p^n}(i_*(\OO_X),i_*(\HHom_{X}(\sE,\sG))) \cong   \HHom_{\p^n}(i_*(\sE),i_*(\sG)).    
  \end{equation}
  To see this isomorphism, we can work locally and replace the map $i$
  with the closed embedding $\Spec(A) \to \Spec(B)$
  induced by a surjective map of $\kk$-algebras $B \to A$, so that
  $\sE$, $\sG$ should be replaced with finitely generated modules $M$,
  $N$ over $A$.
  Then, to prove \eqref{iso} we have to check:
  \[
   \Hom_{B}(M,N) \cong \Hom_{B}(A,\Hom_{A}(M,N))).
  \]
 To do this, it suffices to send a $B$-morphism $u:M\to N$
 to the map $1_A \to u$, and one easily sees that this gives the
 desired isomorphism of $B$-modules.

  In this setting, Grothendieck's spectral sequence associated with the
  composition of the two left-exact functors $\Psi$ and $\Phi$,
  applied after replacing $\sE=\sG=\sL$ and recalling that $\sC = i_*(\sL)$, gives
  the required formula \eqref{spettrale} in view of \eqref{iso}.
\end{proof}

The previous lemmas, together with Proposition \ref{normalsheaf},
suffice to prove Proposition \eqref{sezioni-normale}. 

\begin{eg}
  Going back to our example of a matrix of size $3 \times 4$ over $\p^6$, we see
  that $\Omega_\phi(P_{Y_\phi})$ is 
  the direct sum of the $\OO_{\ell_i}(-1)$ for $i=1,\ldots,10$, and as
  such is supported in codimension $3$ in $Y$.
\end{eg}

\section{Fibres of the map {\it F}} \label{fibres}

We will prove here our main result.
Again we assume \eqref{numerical} and we let
 $\phi$ be a morphism $\phi:\sA \to \sB$ such that $X_\phi$ has
 codimension $c=a-b+1$ hence $\dim(X_\phi)=n-c=n-a+b-1$.

\subsection{Transitiveness on the fibres for fixed cokernel sheaves}

The following lemma shows that, once we fix the isomorphism class of
the cokernel sheaf $\sC_\phi$, the group $\mathbf{G}$ operates transitively
on the fibres of $F$.

\begin{lem} \label{transitive}
Assume $\dim(X_\phi) \ge 1$ and let $\phi'$ be a morphism $\sA \to \sB$ such that the sheaves
$\sC_\phi$ and $\sC_{\phi'}$ are isomorphic.
Then there are $g \in \Aut_{\p^n}(\sA)$ and $h \in \Aut_{\p^n}(\sB)$, such that
$h \circ \phi = \phi' \circ g$.
\end{lem}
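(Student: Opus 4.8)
The plan is to reconstruct the automorphisms $g$ and $h$ from a chosen isomorphism $\sC_\phi \cong \sC_{\phi'}$ by lifting it through the right-exact sequences $\sA \to \sB \to \sC_\phi \to 0$ and $\sA \to \sB \to \sC_{\phi'} \to 0$. The point is that both $\phi$ and $\phi'$ present the \emph{same} sheaf (up to the fixed isomorphism $\theta : \sC_\phi \xrightarrow{\sim} \sC_{\phi'}$), and a presentation of a sheaf by a fixed pair of bundles $\sA, \sB$ is unique up to the action of $\Aut(\sA) \times \Aut(\sB)$, provided the bundles involved have no ``unexpected'' maps between them. So the whole argument reduces to a diagram chase together with a vanishing statement guaranteeing that every homomorphism we need to lift actually exists and that the maps we build are isomorphisms.

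\textbf{Step 1: Lift $\theta$ at the level of $\sB$.} First I would consider the surjections $\pi : \sB \to \sC_\phi$ and $\pi' : \sB \to \sC_{\phi'}$ coming from \eqref{lunga}. Composing $\theta \circ \pi : \sB \to \sC_{\phi'}$, I want to factor this through $\pi'$ by an automorphism $h : \sB \to \sB$, i.e.\ solve $\pi' \circ h = \theta \circ \pi$. Since $\sB$ is a direct sum of line bundles $\OO_{\p^n}(-\beta_i)$ and $\pi'$ is surjective, this amounts to lifting each composite $\OO_{\p^n}(-\beta_i) \to \sC_{\phi'}$ to a map $\OO_{\p^n}(-\beta_i) \to \sB$; this lifting is possible precisely when $\Ext^1_{\p^n}(\OO_{\p^n}(-\beta_i), \ker(\pi')) = \HH^1(\p^n, \ker(\pi')(\beta_i))$ vanishes, and the analogous cohomology on the reverse side controls whether the resulting $h$ is an isomorphism. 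The numerical conditions \eqref{numerical}, via the Buchsbaum--Rim resolution \eqref{BR} of $\sC_{\phi'}$, should be exactly what forces these groups to vanish and hence make $h$ well-defined and invertible.

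\textbf{Step 2: Descend to $\sA$.} Once $h$ is fixed, the equality $\pi' \circ h = \theta \circ \pi$ means $h$ carries $\ker(\pi) = \im(\phi)$ into $\ker(\pi') = \im(\phi')$. Now $\phi$ and $\phi'$ are (after factoring) the maps $\sA \epi \im(\phi)$ and $\sA \epi \im(\phi')$ composed with inclusions into $\sB$; I would produce $g : \sA \to \sA$ making the left square of the diagram
\[
\begin{array}{ccc}
\sA & \xrightarrow{\phi} & \sB \\
\downarrow{\scriptstyle g} & & \downarrow{\scriptstyle h} \\
\sA & \xrightarrow{\phi'} & \sB
\end{array}
\]
commute, i.e.\ $h \circ \phi = \phi' \circ g$, which is exactly the desired relation. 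Constructing $g$ is the same lifting problem one dimension to the left: since $\sA$ is a sum of line bundles mapping to $\im(h \circ \phi) \subset \im(\phi')$, and $\phi'$ presents $\im(\phi')$ with kernel $\sK_{\phi'}$, the lift exists when $\HH^1(\p^n, \sK_{\phi'}(\alpha_j)) = 0$ for each $j$, and $g$ is then invertible by the symmetric cohomological input. Here I would read the vanishing off the left end of the Buchsbaum--Rim complex.

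\textbf{The main obstacle} will be Step 1, specifically verifying that $h$ is an \emph{isomorphism} rather than merely a morphism realizing the factorization. Producing the lift is a routine $\Ext^1$-vanishing argument, but to know that the lifted map of free modules is invertible one typically argues that both $\phi$ and $\phi'$ are \emph{minimal} presentations (no constant entries linking summands of equal twist can be split off), so that every endomorphism inducing an isomorphism on cokernels is itself an automorphism; here the hypothesis $\dim(X_\phi) \ge 1$ and conditions \eqref{numerical} must be used to guarantee minimality and the relevant vanishings. I would handle this by restricting to the graded module $\bigoplus_t \HH^0(\sC_\phi(t))$, whose minimal free resolution over the coordinate ring is unique up to isomorphism by standard minimal-resolution theory; the uniqueness of the minimal resolution then yields $g$ and $h$ directly, with invertibility built in, and the numerical hypotheses ensure that the given presentation $\sA \to \sB$ is the minimal one (no trivial summands to discard). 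This reduces the whole lemma to the uniqueness of minimal free resolutions, which I expect to be the cleanest route.
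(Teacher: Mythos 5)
Your final strategy --- pass to the graded module $N_\phi=\bigoplus_t \HH^0(\p^n,\sC_\phi(t))$ and invoke uniqueness of minimal free presentations --- is a genuinely different route from the paper's. The paper instead lifts everything to the projective bundle $\p(\sB)$: it first proves $\HH^1(\p^n,\im(\phi')(t))=0$ (by a descending induction on the codimension, adding summands to $\sB$ one at a time and using ACM-ness of the intermediate degeneracy loci) to lift the isomorphism of cokernels to $\sB$, then realizes $\phi,\phi'$ as regular sequences $(f_1,\dots,f_a)$, $(f'_1,\dots,f'_a)$ cutting out the complete intersection $Y_\phi\subset\p(\sB)$, produces the map $\mu$ on $\pi^*(\sA)$ via a vanishing for the Koszul syzygy sheaf, and --- crucially --- proves $\mu$ is invertible by a direct argument: if a diagonal entry of its (upper triangular) matrix vanished, some $f_{j_0}$ would lie in the ideal $(f_1,\dots,f_{j_0-1})$, contradicting regularity of the sequence. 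Your graded-module route, if completed, is arguably cleaner and avoids the bundle $\p(\sB)$ entirely; the identification $N_\phi\cong\cok(M_\phi)$ that it needs does hold, because the Buchsbaum--Rim complex \eqref{BR} shows $\cok(M_\phi)$ is a perfect, hence Cohen--Macaulay, module of dimension $\dim(X_\phi)+1\ge 2$, so it has depth $\ge 2$ and coincides with its module of twisted sections. You should say this explicitly, since it is exactly where the hypothesis $\dim(X_\phi)\ge 1$ enters.

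There are, however, two genuine gaps. First, in your Steps 1--2 the invertibility of $g$ and $h$ does not follow from the ``symmetric cohomological input'': composing the two lifts only gives $g'\circ g=\idd_{\sA}+\epsilon$ with $\epsilon$ factoring through $\ker(\phi)$ (and $h'\circ h=\idd_{\sB}+\delta$ with $\delta$ factoring through $\im(\phi)$, which has full rank $b$), and such endomorphisms of a split bundle need not be invertible --- this is precisely the hard point, and the paper spends the regular-sequence argument on it. You recognize this and pivot, which is fine, but it means Steps 1--2 are not a proof. Second, your claim that the numerical hypotheses \eqref{numerical} force the presentation $\sA\to\sB$ to be minimal is false: \eqref{numerical} permits $\alpha_j=\beta_i$, in which case a general $\phi$ has nonzero constant entries and the presentation is non-minimal (e.g.\ $\beta=(0,1)$, $\alpha=(1,1,2)$). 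The minimal-resolution argument can still be salvaged --- any presentation decomposes as the minimal one plus a trivial summand $T\xr{\idd}T$ (plus a free summand of the syzygy module mapping to zero), and since $\phi$ and $\phi'$ present isomorphic modules by the \emph{same} pair $(\sA,\sB)$, these extra summands have matching ranks in each degree, so the two presentations are still conjugate --- but this extra step must be supplied; as written, the reduction to ``uniqueness of the minimal resolution'' does not apply to the situation the lemma actually covers.
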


\begin{proof}
  We have $\sC_\phi \cong \sC_{\phi'}$ and $C_{\phi'} \cong
  C_\phi$.
  The Buchsbaum-Rim complexes associated with $\HH^0_*(\phi)$ and $\HH^0_*(\phi')$ give
  graded free resolutions of $C_{\phi'} \cong C_\phi$.
  If the resolution associated with $\phi$ is minimal, then so is the
  one associated with $\phi'$, since these resolutions share the same
  Betti numbers. By the uniqueness of the minimal graded free
  resolution (see e.g. \cite{eisenbud:commutative-algebra}), we
  get the desired maps $g \in \Aut_{\p^n}(\sA) \cong \Aut_R(A)$, $h \in
  \Aut_{\p^n}(\sB)\cong \Aut_R(B)$ with $h \circ \phi = \phi' \circ g$.

  If the resolution associated with $\phi$ is not minimal, then we have
  $A \cong A_1 \oplus A_2$ and $B \cong B_1 \oplus B_2$, hence a block
  decomposition of $\phi$:
  \[
   \phi = \phi_1 \oplus \phi_2 = 
   \begin{pmatrix}
     \phi_1 & 0 \\
     0 & \phi_2
   \end{pmatrix},
  \]
  where $\phi_1 : A_1 \to B_1$ is minimal and $\phi_2 : A_2 \to B_2$
  is an isomorphism.
  Again by the uniqueness of the minimal graded free
  resolution, we get a decomposition
  $\phi' = \phi'_1 \oplus \phi'_2$ and isomorphisms 
  $g_1 \in \Aut_R(A_1)$, $h_1 \in \Aut_R(B_1)$
  with $h_1 \circ \phi_1 = \phi_1' \circ g_1$.
  Then we can take $g_2 = \id_{A_2}$, and $h_2 = \phi_2' \circ
  \phi_2^{-1}$, so setting $g = g_1 \oplus g_2$ and $h = h_1 \oplus
  h_2$ gives $h \circ \phi = \phi' \circ g$.
\end{proof}

\subsection{Finiteness and uniqueness of determinantal representations}

We start with two lemmas that account for the finiteness and the
uniqueness of the fibre of the map $F$, i.e. of determinantal
representations of a given subvariety $X=X_\phi$ of $\p^n$.
This will lead to the proof of our main result.

\begin{lem} \label{finito}
  If $\dim(X_\phi)\ge 1$,
  then, up to $\G$-action, there
  are finitely many $\phi' \in \bW$ such that:
  \[
  \sC_\phi \not \cong \sC_{\phi'}, \qquad X_\phi = X_{\phi'}.
  \]
\end{lem}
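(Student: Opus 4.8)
The plan is to combine the transitivity result of Lemma \ref{transitive} with the canonical class computation of Lemma \ref{K}. By Lemma \ref{transitive}, any two morphisms with isomorphic cokernel sheaves lie in the same $\G$-orbit; hence the number of $\G$-orbits of $\phi'$ with $X_{\phi'} = X_\phi =: X$ is bounded by the number of isomorphism classes of cokernel sheaves $\sC_{\phi'} = i_*(\sL_{\phi'})$ arising this way. First I would observe that all such $\sL_{\phi'}$ share the same Hilbert polynomial: since $X_{\phi'} = X$ has the expected codimension $c$, the Buchsbaum--Rim complex \eqref{BR} is a resolution of $\sC_{\phi'}$, so the class of $\sC_{\phi'}$ in $K$-theory depends only on $\sA$ and $\sB$. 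Thus the $\sL_{\phi'}$ are rank-one torsion-free (hence stable, and simple since $\HHom_X(\sL_{\phi'},\sL_{\phi'}) \cong \OO_X$) sheaves on $X$ with fixed Hilbert polynomial, and they are parametrized by a finite-type moduli space $M$.

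For $\dim(X) \ge 2$ I would argue by rigidity. The local-to-global spectral sequence for $\Ext^1_X(\sL_{\phi'},\sL_{\phi'})$, together with $\EExt^1_X(\sL_{\phi'},\sL_{\phi'}) = 0$ (established in the last lemma of Section \ref{normale}) and the ACM vanishing $\HH^1(X,\OO_X) = 0$ valid for $\dim(X) \ge 2$, gives
\[
\Ext^1_X(\sL_{\phi'},\sL_{\phi'}) \cong \HH^1(X,\OO_X) = 0.
\]
Hence each $\sL_{\phi'}$ is a rigid point of $M$, so its local ring has vanishing cotangent space and is therefore a reduced isolated point, i.e. a zero-dimensional irreducible component of $M$. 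Since a finite-type scheme has only finitely many irreducible components, there are finitely many such sheaves.

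The remaining case $\dim(X) = 1$ is where the canonical class is essential, and it is the step I expect to be the main obstacle, because now the moduli $M$ is genuinely positive-dimensional and rigidity fails. Here $X$ is a smooth curve, since the singular locus has codimension $\ge 3$ by Lemma \ref{sing}, forcing $\sing(X) = \emptyset$. Applying \eqref{canonicalX} to both $\phi$ and $\phi'$ and using that $\omega_X$, $H_X$, $\ell$, $a$, $b$, $n$ depend only on $X$, $\sA$, $\sB$, I obtain
\[
(a-b)\bigl(c_1(\sL_\phi) - c_1(\sL_{\phi'})\bigr) = 0 \quad \text{in } \Pic(X).
\]
Since $c = a-b+1 \ge 2$ gives $a - b \ge 1$, the difference $c_1(\sL_\phi) - c_1(\sL_{\phi'})$ is an $(a-b)$-torsion element of $\Pic(X)$; as $c_1$ determines $\sL_{\phi'}$ (Section \ref{coker}), the possible $\sL_{\phi'}$ are indexed by the $(a-b)$-torsion subgroup of $\Pic^0(X) = \mathrm{Jac}(X)$, which is finite. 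The delicate point to check carefully here is precisely this interplay: a positive-dimensional family of rank-one sheaves exists abstractly, but the canonical-class constraint pins $c_1(\sL_{\phi'})$ to a single coset of a finite torsion group.

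Finally I would note that in both cases the finitely many isomorphism classes of cokernels pull back, via Lemma \ref{transitive}, to finitely many $\G$-orbits of morphisms $\phi'$, which is the desired assertion.
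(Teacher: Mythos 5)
Your proof is correct in substance, but in the main case $\dim(X_\phi)\ge 2$ it takes a genuinely different route from the paper. The paper argues uniformly in all dimensions $\ge 1$ via the canonical class: applying Lemma \ref{K} to $\phi$ and $\phi'$ gives $(a-b)(P_{X}-P'_{X})=0$ in $\cl(X)$, this is transported to $\Pic(Y_\phi)$ through the isomorphism $q^*:\cl(X)\to\cl(Y_\phi)$ (using that $q$ is biregular outside a set of codimension $\ge 2$ and that $P_{Y_\phi}$ is Cartier), and one concludes because $\Pic(Y_\phi)$ has only finitely many points of order $a-b$. Your dimension-one case is exactly this argument specialized to a smooth curve. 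In dimension $\ge 2$, however, you replace it by a rigidity argument: $\Ext^1_X(\sL_{\phi'},\sL_{\phi'})=0$ from the local-to-global spectral sequence together with $\EExt^1_X(\sL_{\phi'},\sL_{\phi'})=0$ and the ACM vanishing $\HH^1(X,\OO_X)=0$, so the cokernel sheaves are isolated reduced points of a bounded family. This is a legitimate alternative and even yields slightly more (the sheaves are isolated in moduli, not merely constrained in their first Chern class); the price is the appeal to moduli of stable sheaves, which you could avoid by noting that boundedness is free here: the family $\{\sC_{\phi'}\}$ is the image of a subset of the finite-dimensional space $\bW$.

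Two points should be made explicit. First, the vanishing $\EExt^1_X(\sL,\sL)=0$ is proved in Section \ref{normale} only for $\sL=\sL_\phi$, and its proof rests on $\codim(\rD_{b-2}(\phi),X)\ge 3$; you apply it to every $\sL_{\phi'}$ with $X_{\phi'}=X_\phi$. This is justified because $\rD_{b-2}(\phi')\subseteq\sing(X_{\phi'})=\sing(X_\phi)$, so the same codimension bound holds for $\phi'$, but you should say so. Second, both your argument and the paper's use the genericity of $\phi$ (Lemma \ref{sing} in your case, the codimension-$\ge 2$ biregularity of $q$ in the paper's), which is not among the stated hypotheses of the lemma but is harmless since the lemma is only invoked for general $\phi$ in the proof of the main theorem.
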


\begin{proof}
  Let $\phi'$ be a morphism $\sA \to \sB$ such that $X_\phi =
  X_{\phi'}$ and set $X=X_\phi$.
  Then $\phi'$ defines a cokernel sheaf $\sC_{\phi'} \cong
  i_*(\sL_{\phi'})$.
  Also, $X$ is the image via a map $q'$ of $Y_{\phi'}=\V(s_{\phi'})$,
  according to Lemma \ref{proj}, and we have $\sL_{\phi'} \cong
  q_*(\OO_{Y_{\phi'}}(P'_{Y_{\phi'}}))$.

  Now comes an important point to obtain our result, namely the
  canonical class of $X_\phi$.
  We set $c_1(\sL_{\phi})=P_{X}$, $c_1(\sL_{\phi'})=P'_{X}$, as an element of $\cl(X)$,
  and we define the divisor class $H_\sP = \pi^*(H_{{\p^n}})$ and its restriction $H_{Y_\phi}$ to $Y_\phi$.
  We have (see e.g. \cite{bruns-vetter:LNM}):
  \[
    K_{Y_\phi} \cong (\ell-n-1)H_{Y_\phi}+(a-b)P_{Y_\phi},\quad
     c_1(\omega_{X_\phi}) \cong (\ell-n-1)H_{X_\phi}+(a-b)P_{X_\phi}.
  \]
  Therefore:
  \[
  (\ell-n-1)H_{X}+(a-b)P_{X} \equiv (\ell-n-1)H_{X}+(a-b)P'_{X}, \qquad  \mbox{in $\cl(X)$}.
  \]
  Pulling back to $Y_\phi$ we get the equality:
  \[
  (a-b)(P_{Y_\phi}-q^*(P'_{X})) \equiv 0, \qquad  \mbox{in $\cl(Y_\phi)$}.
  \]

  We observe that $q^*$ gives an isomorphism between $\cl(Y_\phi)$ and
  $\cl(X)$, for $q$ is biregular outside a closed subset of codimension
  at least $2$. Moreover, $P_{Y_\phi}$ is a Cartier divisor of $Y_\phi$ so
  the above equality takes place in $\Pic(Y_\phi)$.
  Note that $a-b \neq 0$ by hypothesis, and that $\Pic(Y_\phi)$ has only finitely
  many points of order $a-b$.
  Indeed, this is clear if $\dim(Y_\phi)=1$ for $\Pic(Y_\phi)$ is then
  smooth, and also if $\dim(Y_\phi) \ge 2$ since in this case
  $\HH^1(Y_\phi,\OO_{Y_\phi}) = \HH^1(X,\OO_X) = 0$, so
  that $\Pic(Y_\phi)$ is reduced to the Néron-Severi group, which is
  finitely generated.

  Then, recalling that $c_1(\sL_{\phi'})$
  determines $\sL_{\phi'}$ (see Section \ref{coker}),
  we get that there are only finitely many 
  ways to choose the isomorphism class of $\sL_{\phi'}$ in such a way that the above equation is satisfied.
  In other words, there are $\phi_1,\ldots,\phi_r$
  such that $\sC_{\phi_i} \not\cong \sC_{\phi_j}$ if $i\neq j$,
  and such that for any other $\phi_0 \in \bW$ we have $\sC_{\phi_0} \cong \sC_{\phi_i}$
  for one $i=1,\ldots,r$.
  By Lemma \eqref{transitive}, this $\phi_0$ is taken to $\phi_i$ by
  the action of $\G$, which proves our claim.
\end{proof}

\begin{lem} \label{iniett}
  Let $\phi$ be a general element of $\Hom_{\p^n}(\sA,\sB)$, and assume:
  \[
   \alpha_1 > \beta_b, \qquad  \dim(X_\phi) \geq 2, \qquad c = a-b+1 \geq 2.
  \]
  Then, given another morphism $\phi'$ with $X_\phi = X_{\phi'}$, we
  have $\sC_\phi \cong \sC_{\phi'}$.
\end{lem}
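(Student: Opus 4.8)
The plan is to upgrade Lemma \ref{finito}: under the extra hypothesis $\beta_b < \alpha_1$ I will show that $\Pic(Y_\phi)$ is torsion-free, and everything else then follows formally. Set $Y=Y_\phi$. Running the argument of Lemma \ref{finito} verbatim, any $\phi'$ with $X_{\phi'}=X_\phi$ produces a class $P'_X=c_1(\sL_{\phi'})\in\cl(X)$ with
\[
(a-b)\bigl(P_{Y_\phi}-q^*(P'_X)\bigr)=0 \quad\text{in } \Pic(Y).
\]
Since $a-b\neq 0$, torsion-freeness of $\Pic(Y)$ forces $P_{Y_\phi}=q^*(P'_X)$, hence $c_1(\sL_\phi)=c_1(\sL_{\phi'})$ in $\cl(X)$; as $c_1$ determines these reflexive rank-one sheaves (Section \ref{coker}), we get $\sL_\phi\cong\sL_{\phi'}$ and therefore $\sC_\phi\cong\sC_{\phi'}$. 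So the whole lemma reduces to the torsion-freeness of $\Pic(Y)$.

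To obtain this I would exploit that $\beta_b<\alpha_1$ makes $Y$ a complete intersection of \emph{ample} divisors in $\cP=\p(\sB)$. Recall $Y=\V(s_\phi)$ is cut out by the $a$ components of $s_\phi\in\HH^0(\cP,\pi^*(\sA)^*(P))$, and $\pi^*(\sA)^*(P)=\bigoplus_j\OO_\cP(\alpha_j H_\cP+P)$. By the standard ampleness criterion on projective bundles, $\OO_\cP(\alpha_j H_\cP+P)=\OO_\cP(1)\ts\pi^*\OO_{\p^n}(\alpha_j)$ is ample exactly when $\sB(\alpha_j)=\bigoplus_i\OO_{\p^n}(\alpha_j-\beta_i)$ is an ample bundle, i.e. when $\alpha_j>\beta_b$; this holds for every $j$ because $\alpha_j\ge\alpha_1>\beta_b$. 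In particular $\pi^*(\sA)^*(P)$ is globally generated as well (since $\alpha_j>\beta_b\ge\beta_i$), so for $\phi$ general Bertini guarantees that $Y$, together with all its partial intersections, is smooth.

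Now I would transfer topological information from $\cP$ to $Y$ by an iterated Lefschetz hyperplane theorem. Write $Y$ as a chain of smooth ample divisor sections $\cP=W_0\supset W_1\supset\cdots\supset W_a=Y$ with $\dim W_k=n+b-1-k$, so that every intermediate $W_k$ has dimension $\ge\dim Y\ge 2$. The Lefschetz theorem then applies at each step and yields isomorphisms $\pi_1(W_k)\cong\pi_1(W_{k-1})$; since $\cP=\p(\sB)$ is simply connected (a projective bundle over $\p^n$), $Y$ is simply connected, whence $H_1(Y,\Z)=0$ and $H^2(Y,\Z)$ is torsion-free by universal coefficients. On the other hand $X_\phi$, and hence $Y$, is ACM of dimension $\ge 2$, so $\HH^1(Y,\OO_Y)=0$ and $\Pic^0(Y)=0$. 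Feeding these two facts into the exponential sequence gives an injection $\Pic(Y)\hookrightarrow H^2(Y,\Z)$ into a torsion-free group, so $\Pic(Y)$ is torsion-free, as required.

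The delicate point — which I expect to be the main obstacle — is precisely the surface case $\dim Y=2$. There the naive Grothendieck--Lefschetz comparison $\Pic(\cP)\to\Pic(Y)$ is only injective and its cokernel is a priori uncontrolled, so one cannot conclude torsion-freeness from a direct Picard-group isomorphism. One must instead route the argument through the $\pi_1$ (equivalently $H_1$) version of the Lefschetz theorem, which does persist down to ample divisors of dimension $2$. Care is also needed that the successive ample sections can be chosen simultaneously smooth (Bertini for general $\phi$), and that the topological input is invoked in a form valid over the ground field in use, replacing the exponential sequence by its étale (Kummer) analogue to bound the relevant torsion when the characteristic is positive.
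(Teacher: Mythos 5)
Your reduction is exactly the paper's: reuse the order-$(a-b)$ relation $(a-b)\bigl(P_{Y_\phi}-q^*(P'_X)\bigr)=0$ in $\Pic(Y_\phi)$ from Lemma \ref{finito}, observe that $\beta_b<\alpha_1$ makes each $\OO_\cP(\alpha_j H_\cP+P)$ (very) ample because $\pi_*$ of it is a sum of positive line bundles, and conclude once $\Pic(Y_\phi)$ is known to be torsion-free. Where you diverge is the last step. The paper disposes of it in one line by invoking the \emph{algebraic} Grothendieck--Lefschetz theorem in the form of B\u{a}descu \cite{badescu:picard}: for a smooth ample divisor of dimension $\ge 2$ the restriction of Picard groups is injective with torsion-free cokernel, so iterating from $\Pic(\cP)\cong\Z^2$ gives torsion-freeness in every characteristic, including the surface case you single out as delicate. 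Your route --- iterated $\pi_1$-Lefschetz to get simple connectedness of $Y$, universal coefficients to get $H^2(Y,\Z)$ torsion-free, plus $\HH^1(Y,\OO_Y)=0$ from ACM-ness and the exponential sequence --- is correct over $\C$ and is a legitimate alternative there. However, the paper works over an arbitrary algebraically closed field, and your proposed fix for positive characteristic is insufficient as stated: the Kummer sequence identifies $\Pic(Y)[m]$ with $\HH^1_{\mathrm{et}}(Y,\mu_m)$ only for $m$ prime to $p=\mathrm{char}(\kk)$, so if $p\mid(a-b)$ the $p$-part of the torsion is not controlled by \'etale $\pi_1$, and one would have to argue separately (e.g.\ via flat cohomology or, more simply, by falling back on the SGA2/B\u{a}descu statement). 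So either restrict to $\kk=\C$ or replace the topological step by the algebraic Grothendieck--Lefschetz citation.
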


\begin{proof}
  Let $X=X_\phi$, and consider the divisor $P_X$ associated with
  $\sC_\phi$.
  In view of the considerations of the previous lemma,
  any $\phi'$ such that $X_{\phi'}=X$ gives a divisor 
  class $P'_X$ such that:
  \[
  (a-b)(P_{Y_\phi}-q^*(P'_{X})) \equiv 0, \qquad  \mbox{in $\Pic(Y_\phi)$}.
  \]
  
  Again $a-b \neq 0$, so that $P_{Y_\phi}-q^*(P'_{X})$ is of order $a-b$ in
  $\Pic(Y_\phi)$. Now $Y_\phi$ is the complete intersection in
  $\sP$, of $a$ (Cartier) divisors, whose classes are:
  \[
  (\alpha_1 H_\sP + P_\sP,\ldots,\alpha_a H_\sP + P_\sP).
  \]
  Under the hypothesis $\alpha_1>\beta_b$, all the divisors
  $\alpha_j H_\sP + P_\sP$ are very ample on $\sP$, indeed the direct
  image $\pi_*(\OO_\sP(\alpha_j H_\sP + P_\sP))$ decomposes as a
  direct sum of positive line bundles.
  Therefore we can argue, by Grothendieck-Lefschetz theorem, that
  $Y_\phi$ is smooth and $\Pic(Y_\phi)$ is
  torsion-free provided that $\dim(Y_\phi)=\dim(X) \ge 2$ (see for instance
  \cite{badescu:picard}).
  Then $q^*(P'_X)=P_{Y_\phi}$ hence $\sC_\phi \cong \sC_{\phi'}$ as in
  the previous proof.
\end{proof}

\subsection{Proof of the main result} We are now in position to prove
our main theorem.

  The map $F$ is defined on a dense open subset of $\sY$ as soon as
  there is $\phi \in \bW$ such that $X_\phi$ has codimension
  $a-b+1$ unless $X_\phi$  is empty, and this is ensured by \eqref{standard}, which is clearly an open condition.
  By hypothesis we have $\dim(X_\phi) =n+b-a-1 \ge 1$, so we can apply Lemma \ref{finito}.
  We get that there are only finitely many $\G$-orbits in the
  inverse image of a given point $[X_\phi] \in \sH$, so that $F$ is
  generically finite, which proves \eqref{main-i}.

  Next, assuming $\dim(X_\phi) =n+b-a-1 \ge 2$, by Proposition
  \ref{sezioni-normale} and Lemma \ref{dimY} we have $\dim
  \sT_{[X_\phi],\sH} \le \dim(\sY)$, 
  and $\sY$ is irreducible and generically smooth.
  Since $F$ is generically finite, the image of $F$ also has
  dimension $\dim(\sY)$, so $\dim \sT_{[X_\phi],\sH} = \dim(\sH)$ (i.e.
  the inequality in Proposition \eqref{sezioni-normale} is an equality).
  Hence $\sH$ is generically smooth and $F$ is dominant on $\sH$, so
  $\sH$ is also irreducible.
  This proves  \eqref{main-ii}.

  Finally, assuming 
  $\alpha_1>\beta_b$ and $\dim(X_\phi) \geq 2$, $c =
  a-b+1 \geq 2$, we can apply Lemma \ref{iniett}.
  Then, by Lemma \ref{transitive} the action of $\mathbf{G}$ is
  generically transitive on the set of determinantal representations of
  $X_\phi$, so $F$ is generically injective. Still $F$ is dominant by
  part \eqref{main-ii}, hence $F$ is birational.

\bibliographystyle{amsalpha}
\bibliography{fania-faenzi.determinantal}

\newcommand{\etalchar}[1]{$^{#1}$}
\def\cprime{$'$} \def\cprime{$'$} \def\cprime{$'$} \def\cprime{$'$}
  \def\cprime{$'$} \def\cprime{$'$} \def\cprime{$'$} \def\cprime{$'$}
  \def\cprime{$'$}
\providecommand{\bysame}{\leavevmode\hbox to3em{\hrulefill}\thinspace}
\providecommand{\MR}{\relax\ifhmode\unskip\space\fi MR }
% \MRhref is called by the amsart/book/proc definition of \MR.
\providecommand{\MRhref}[2]{%
  \href{http://www.ams.org/mathscinet-getitem?mr=#1}{#2}
}
\providecommand{\href}[2]{#2}
\begin{thebibliography}{KMMR{\etalchar{+}}01}

\bibitem[B{\u{a}}d78]{badescu:picard}
Lucian B{\u{a}}descu, \emph{A remark on the {G}rothendieck-{L}efschetz theorem
  about the {P}icard group}, Nagoya Math. J. \textbf{71} (1978), 169--179.

\bibitem[BV88]{bruns-vetter:LNM}
Winfried Bruns and Udo Vetter, \emph{Determinantal rings}, Lecture Notes in
  Mathematics, vol. 1327, Springer-Verlag, Berlin, 1988.

\bibitem[Cha89]{chang:bertini}
Mei~Chu Chang, \emph{A filtered {B}ertini-type theorem}, J. Reine Angew. Math.
  \textbf{397} (1989), 214--219.

\bibitem[Ein93]{ein:determinantal}
Lawrence Ein, \emph{On the cohomology of projectively {C}ohen-{M}acaulay
  determinantal subvarieties of {${\bf P}^n$}}, Geometry of complex projective
  varieties ({C}etraro, 1990), Sem. Conf., vol.~9, Mediterranean, Rende, 1993,
  pp.~143--152.

\bibitem[Eis95]{eisenbud:commutative-algebra}
David Eisenbud, \emph{Commutative algebra}, Graduate Texts in Mathematics, vol.
  150, Springer-Verlag, New York, 1995, With a view toward algebraic geometry.

\bibitem[Ell75]{ellingsrud:hilbert}
Geir Ellingsrud, \emph{Sur le sch\'ema de {H}ilbert des vari\'et\'es de
  codimension {$2$} dans {${\bf P}^{e}$} \`a c\^one de {C}ohen-{M}acaulay},
  Ann. Sci. \'Ecole Norm. Sup. (4) \textbf{8} (1975), no.~4, 423--431.

\bibitem[FF10]{faenzi-fania:palatini}
Daniele Faenzi and Maria~Lucia Fania, \emph{Skew-symmetric matrices and
  {P}alatini scrolls}, Math. Ann. \textbf{347} (2010), 859--883,
  10.1007/s00208-009-0450-5.

\bibitem[Har77]{hartshorne:ag}
Robin Hartshorne, \emph{Algebraic geometry}, Springer-Verlag, New York, 1977,
  Graduate Texts in Mathematics, No. 52.

\bibitem[Har92]{harris:ag}
Joe Harris, \emph{Algebraic geometry}, Graduate Texts in Mathematics, vol. 133,
  Springer-Verlag, New York, 1992, A first course.

\bibitem[{Kle}10]{kleppe:mannaggia}
Jan~O. {Kleppe}, \emph{{Deformations of modules of maximal grade and the
  Hilbert scheme at determinantal schemes}}, ArXiv e-print math.AG/1012.1976
  (2010).

\bibitem[Kle11]{kleppe:low-JPAA}
Jan~O. Kleppe, \emph{Families of low dimensional determinantal schemes}, J.
  Pure Appl. Algebra \textbf{215} (2011), no.~7, 1711--1725.

\bibitem[KMMR{\etalchar{+}}01]{kleppe-migliore-miro-roig-nagel-peterson}
Jan~O. Kleppe, Juan~C. Migliore, Rosa~M. Mir{\'o}-Roig, Uwe Nagel, and Chris
  Peterson, \emph{Gorenstein liaison, complete intersection liaison invariants
  and unobstructedness}, Mem. Amer. Math. Soc. \textbf{154} (2001), no.~732,
  viii+116.

\bibitem[KMR05]{kleppe-miro-roig:dimension}
Jan~O. Kleppe and Rosa~M. Mir{\'o}-Roig, \emph{Dimension of families of
  determinantal schemes}, Trans. Amer. Math. Soc. \textbf{357} (2005), no.~7,
  2871--2907 (electronic).

\bibitem[KMR11]{kleppe-miro-roig:families-PAMS}
\bysame, \emph{Families of determinantal schemes}, Proc. Amer. Math. Soc.
  \textbf{139} (2011), no.~11, 3831--3843.

\bibitem[MR08]{miro-roig:determinantal}
Rosa~M. Mir{\'o}-Roig, \emph{Determinantal ideals}, Progress in Mathematics,
  vol. 264, Birkh\"auser Verlag, Basel, 2008.

\bibitem[Nor76]{northcott:finite-free}
Douglas~Geoffrey Northcott, \emph{Finite free resolutions}, Cambridge
  University Press, Cambridge, 1976, Cambridge Tracts in Mathematics, No. 71.

\bibitem[Pra88]{pragacz:enumerative}
Piotr Pragacz, \emph{Enumerative geometry of degeneracy loci}, Ann. Sci.
  \'Ecole Norm. Sup. (4) \textbf{21} (1988), no.~3, 413--454.

\bibitem[Ros63]{rosenlicht:remark}
Maxwell Rosenlicht, \emph{A remark on quotient spaces}, An. Acad. Brasil. Ci.
  \textbf{35} (1963), 487--489.

\bibitem[{Tan}14]{tanturri:degeneracy}
Fabio {Tanturri}, \emph{{On the Hilbert scheme of degeneracy loci of twisted
  differential forms}}, ArXiv e-print math.AG/1401.8188 (2014).

\bibitem[Val11]{valles:schwarzenberger-math-z}
Jean Vall{\`e}s, \emph{Fibr\'es de {S}chwarzenberger et fibr\'es logarithmiques
  g\'en\'eralis\'es}, Math. Z. \textbf{268} (2011), no.~3-4, 1013--1023.

\bibitem[Wey03]{weyman:tract}
Jerzy Weyman, \emph{Cohomology of vector bundles and syzygies}, Cambridge
  Tracts in Mathematics, vol. 149, Cambridge University Press, Cambridge, 2003.

\end{thebibliography}

\vfill

\end{document}